\theoremstyle{plain}
\newtheorem{theorem}{Theorem}[section]
\newtheorem{lemma}{Lemma}[section]
\newtheorem{corollary}{Corollary}[section]
\theoremstyle{definition}
\newtheorem{remark}{Remark}[section]
\begin{document}

\title[On the de Rham complex over weighted H\"older spaces]{On the cohomologies of the de Rham complex over weighted isotropic and anisotropic H\"older spaces}

\author{K.V. Gagelgans}

\address[Ksenia Gagelgans]
       {Siberian Federal University
\\
         Institute of Mathematics and Computer Science
\\
         pr. Svobodnyi 79
\\
         660041 Krasnoyarsk
\\
         Russia}
				
\email{ksenija.sidorova2017@yandex.ru}

\begin{abstract}
We consider the de Rham complex over scales of 
weighted isotropic and anisotropic H\"older spaces with prescribed asymptotic 
behaviour at the infinity.  
Starting from  theorems on the solvability of the system of operator equations generated by the de Rham differential $d$ and the operator $d^*$ formally adjoint to it, a description of the cohomology groups of the de Rham complex over these scales was obtained. It was also proved that in the isotropic case the cohomology space is finite-dimensional, and in the anisotropic case the general form of an element from the cohomology space is presented.
\end{abstract}

\keywords{Weighted H\"older spaces, integral representation's method, anisotropic spaces}

\subjclass{Primary 47B01; Secondary 58A12, 35N05}

\maketitle 




\section*{Introduction}

It is well known that de Rham  cohomologies provide a lot of information 
on important invariants of a differentiable manifold, see for instance, \cite{deRh55}. 
However advances in the theory of differential equations and, 
more generally, in theory of complexes of differential operators, 
lead us to a slightly different concept: differential complexes 
over function spaces on the corresponding manifold 
(Hilbert spaces, Banach spaces, Fr\'echet spaces, etc.), 
see, for instance, \cite{Hoer65}, \cite{AT6}, \cite{Tark95a}. 
As applied to the de Rham complex, this concept can be traced in the theory of 
cohomologies over Lebesgue and Sobolev spaces, see, for instance, 
\cite{GKShw82}, \cite{Vod2010}. On this way, the classical de Rham 
cohomologies (over the Fr\'echet spaces of smooth functions) and 
cohomologies over Banach spaces may essentially differ.

This article is a logical continuation of papers \cite{SidShl19} and \cite{SidShl20}, 
in which we discussed solvability conditions for the 
operator equations generated by the differentials of the de Rham complex over isotropic 
weighted H\"older spaces on the Euclidean space ${\mathbb R}^n$ and the 
induced de Rham  differentials over anisotropic weighted H\"older spaces on 
the strip ${\mathbb R}^n \times [0,T]$ 
from the Euclidean space ${\mathbb R}^{n+1}$, where the variable $t\in [0,T]$, 
is considered as a parameter included in the coefficients of differential forms.
One of the motivations for studying the cohomology groups of the induced complex is the connection 
between the de Rham complex and various models of hydrodynamics, see, for example, 
\cite{BerMaj02}, \cite{ShlTa18}.

More precisely, using the method of integral representations, the technique of working in 
weighted spaces elaborated in articles \cite{NireWalk73}, \cite{McOw79},
and the results obtained in \cite{SidShl19} and \cite{SidShl20}, 
it is proved in the paper that the cohomology groups
of the de Rham complex over isotropic weighted 
H\"older spaces is finite-dimensional, and their dimension is indicated in dependence on the weight index of the space and the degree of the forms, and one of the possible 
bases in the cohomology space
 is indicated. It is pertinent to note that there is no countable basis in the induced cohomology 
space over non-separable weighted H\"older spaces.

\section{Isotropic case}
As usual, we denote by $\mathbb{R}^n$ the Euclidean space of dimension 
$n\geqslant 1$. Let us first consider the classical de Rham complex over the domain ${\mathcal X}$ in $\mathbb{R}^n$:
\begin{equation*}
0 \rightarrow  \Omega ^{0}({\mathcal X}) \xrightarrow{d^0}
\Omega ^{1}({\mathcal X}) \xrightarrow{d^1} 
\Omega ^{2}({\mathcal X}) \xrightarrow{d^2} \ldots  
\xrightarrow{d^{n-1}} \Omega ^{n}({\mathcal X}) \rightarrow 0;   
\end{equation*}
here 
$d^q$ are de Rham differentials given by exterior derivatives,
$\Omega ^{0}({\mathcal X})$ is the space of infinitely differentiable functions on 
${\mathcal X}$, 
and $\Omega ^{k}({\mathcal X})$ is the space of exterior differential
$k$-forms whose coefficients are smooth functions on ${\mathcal X}$.

As is well known, the cohomology groups of this complex are trivial 
for $0<q \leqslant n$ if ${\mathcal X}$ is a convex 
(or, more generally, star-shape) domain, see \cite{deRh55}.  
However, when considering the de Rham complex over other function spaces, the cohomology 
spaces may differ significantly from the classical ones, see, for example, 
\cite{GKShw82}, \cite{Vod2010}. 

In the papers \cite{SidShl19} and \cite{SidShl20} 
the following systems of 
operator  equations were considered in weighted H\"older spaces over $\mathbb{R}^n$:
\begin{equation*}
\begin{cases}
du=f, \\
d^*u=g,
\end{cases}
\end{equation*}
where $d$, as above, denotes
the de Rham differential, and $d^*$ is formally adjoint to it, and the weights regulate the 
decreasing order of the considered differential forms at the point at infinity. It turned out 
that as the order of decrease of the desired differential form increases, sufficiently 
restrictive conditions for the solvability of this system of operator equations appear. 
This led us to the question of describing the cohomology groups 
of the de Rham complex on the chosen spaces.

Let's describe the situation in more detail. Namely, we put
\begin{equation*}
   w (x)
 =
 \sqrt{1 + |x|^2}, \,\,
   w (x,y)
 =
 \max \big\{ w (x), w (y) \big\} , \,\, x, y \in \mathbb{R}^n.
\end{equation*}

For $s\in {\mathbb Z}_+$ and $\delta \in \mathbb{R}$
we denote by $C^{s,0}_\delta$ the space of $s$ 
times continuously differentiable functions in $\mathbb{R}^n$ with finite norm
\begin{equation*}
   \| u \|_{C^{s,0}_\delta}
 = \sum_{|\alpha| \leqslant s}
   \sup_{x \in \mathbb{R}^n}
   w^{\delta+|\alpha|} (x)
   \big|\partial^\alpha u (x)\big|.
\end{equation*}

Let $\mathcal{X}$ be some subset in $\mathbb{R}^n$. Then let $U \subset \mathbb{R}^n$ be a non-empty bounded neighborhood of zero in the topology $\mathbb{R}^n$, if $\mathcal{X}$ is not bounded, or an empty set otherwise.
For $0 < \lambda \leqslant 1$  we set
\begin{equation*}
   \langle u \rangle_{\lambda,\delta,\overline {\mathcal X}}
 = \sup_{x,y \in \overline {\mathcal X} \setminus U  ,  x \neq y \atop   |x-y| \leqslant |x|/2    }
   w ^{\delta+\lambda} (x,y) \frac{\big|u (x) - u (y)\big|}{|x-y|^\lambda}.
\end{equation*}

Let $C^{0,\lambda}_\delta$ consists of all continuous functions on
$\mathbb{R}^n$ with a finite norm
\begin{equation*}
   \| u \|_{C^{0,\lambda}_\delta}
 = \| u \|_{C^{0,\lambda} (\overline U)}
 + \| u \|_{C^{0,0}_\delta} + \langle u \rangle_{\lambda,\delta,\mathbb{R}^n}, 
\end{equation*}
where $\| \cdot \|_{C^{0,\lambda} (\overline U)} = \|\cdot \|_{C^{0,0} (\overline U)}
+ \langle \cdot \rangle_{\lambda,\overline U}$ is the norm of the usual 
H\"older space $C^{0,\lambda} (\overline U)$ on the compact $\overline U$. 
Finally, for $s \in \mathbb{Z}_+$, let $C^{s,\lambda}_\delta$ 
denotes the space of all $s$  times continuously differentiable functions on $\mathbb{R}^n$ with a finite norm 
\begin{equation*}
   \| u \|_{C^{s,\lambda}_{\delta}}
 = \sum_{|\alpha| \leqslant s}
   \| \partial^\alpha u \|_{C^{0,\lambda}_{\delta+|\alpha|} }.
\end{equation*}

We proceed to the construction of the de Rham complex over the selected Banach spaces. To this end, for the differential operator $A$, acting on differential forms over 
$\mathbb{R}^n$, we denote by
$C^{s,\lambda}_{\delta,\varLambda^{q}} \cap \mathcal{S}_A$
the space of differential forms 
$u \in C^{s,\lambda}_{\delta,\varLambda^{q}} $, satisfying
$Au = 0$ in the sense of distributions in $\mathbb{R}^n$. Obviously, this is a closed subspace in
$C^{s,\lambda}_{\delta,\varLambda^{q}}$, 
which means it is a Banach space with the induced norm.

Below we will consider the following complex of Banach spaces 
\begin{equation}
\label{complex1}
0 \rightarrow C^{s,\lambda}_{\delta, \varLambda^{0}} \xrightarrow{d_0}
C^{s-1,\lambda}_{\delta+1, \varLambda^{1}} \xrightarrow{d_1}\ldots \xrightarrow{d_{s-2}} 
C^{1,\lambda}_{\delta+(s-1), \varLambda^{s-1}} \xrightarrow{d_{s-1}}
C^{0,\lambda}_{\delta+s, \varLambda^{s}}  \cap \mathcal{S}_{d^{s}}  \xrightarrow{d_{s}} 0,
\end{equation}
where $d^q$ are de Rham differentials given by external derivatives, and
$\varLambda^{q}$ are bundles of exterior differential forms of degree $0 \leqslant q \leqslant n$ over $\mathbb{R}^n$. 

The first goal of this paper is to describe the cohomology groups 
of the complex (\ref{complex1}), therefore, let the set $Z^{s,\lambda}_{\delta, \varLambda^{q}}$ denotes the space of cocycles and consists of the forms contained in intersection
$C^{s,\lambda}_{\delta, \varLambda^{q}} \cap \mathcal{S}_{d^{q}}$, the set
$B^{s,\lambda}_{\delta, \varLambda^{q}}$ denotes the space of coboundaries and consists of forms 
$f \in C^{s,\lambda}_{\delta, \varLambda^{q}}$, for which exists
$u \in C^{s+1,\lambda}_{\delta-1, \varLambda^{q-1}}$, satisfying the equation $f=du$,
and the set 
$H^{s,\lambda}_{\delta, \varLambda^{q}}$ denotes the cohomology space, that is, is equal to set $Z^{s,\lambda}_{\delta, \varLambda^{q}} / B^{s,\lambda}_{\delta, \varLambda^{q}}$.

In order to pass to the description of cohomology spaces, it is necessary to 
cite some previous results, namely: in the paper \cite{SidShl19} continuous linear operators
\begin{equation}
 \label{eq.DR}
\left(d^{q}, (d^{q-1})^*\right): C^{s+1,\lambda}_{\delta,\varLambda^{q}} 
\to C^{s,\lambda}_{\delta+1 ,\varLambda^{q+1}}  \cap \mathcal{S}_{d^{q+1}}
 \oplus C^{s,\lambda}_{\delta+1, \varLambda^{q-1}}  \cap \mathcal{S}_{(d^{q-2})^*}, 
\end{equation}
\begin{equation}
 \label{eq.DR.hat}
\left(d^{q}, (d^{q-1})^*\right):  \mathfrak{C}^{s+1,\lambda}_{\delta,\varLambda^{q}} 
\to  \mathfrak{C}^{s,\lambda}_{\delta+1 ,\varLambda^{q+1}}  \cap \mathcal{S}_{d^{q+1}}
 \oplus  \mathfrak{C}^{s,\lambda}_{\delta+1, \varLambda^{q-1}}  \cap \mathcal{S}_{(d^{q-2})^*},
\end{equation}
were considered, where
$\mathfrak{C}^{s,\lambda}_{\delta,\varLambda^{q}}$ is the closure of the space ${\mathcal D} ({\mathbb R}^n)$ of infinitely differentiable functions with compact supports in ${\mathbb R}^n$ in the space 
$C^{s,\lambda}_{\delta,\varLambda^{q}}$. A more detailed description of this space was given in \cite[Theorem 2]{SidShl19}. Taking as 
$\mathcal{H}_{\leqslant m, \varLambda^{q}}$ the space of all differential forms of degree $q$, whose coefficients are harmonic polynomials of degree $\leqslant m$,  the following theorem was formulated and proved.

\begin{theorem}
\label{t.weight.Hoelder.d}
Let  
   $n \geqslant 2$,
   $s\in {\mathbb Z}_+$,  $0 < \lambda < 1$.
If   $\delta > 0$ and
   $\delta+1-n \not\in \mathbb{Z}_+$, then
the operators  \eqref{eq.DR} and  \eqref{eq.DR.hat} are Fredholm. Moreover,
\begin{itemize}
\item \eqref{eq.DR} and \eqref{eq.DR.hat} are isomorphisms if $0 < \delta < n-1$;
\item \eqref{eq.DR} and \eqref{eq.DR.hat} are injections with a closed image if 
$n-1+m < \delta < n+m$ for $m \in \mathbb{Z}_+$; 
more precisely, the image of the operator \eqref{eq.DR}
consists of all pairs
   $f \in  C^{s,\lambda}_{\delta+1,\varLambda^{q+1}} \cap 
\mathcal{S}_{d^{q+1}}$, 
	$g \in  C^{s,\lambda}_{\delta+1,\varLambda^{q-1}} \cap 
\mathcal{S}_{(d^{q-2})^*}$, 
satisfying
\begin{equation}
\label{eq.image}
(f ,d^q h)_{L^2 _{\varLambda^{q+1}}({\mathbb R}^n)} +
(g ,(d^{q-1})^* h)_{L^2_{\varLambda^{q-1}} ({\mathbb R}^n)}= 0 
\mbox{ for all } h \in \mathcal{H}_{\leqslant m+1, \varLambda^{q}},
\end{equation}
and the image of the operator \eqref{eq.DR.hat}
consists of all pairs
   $f \in  \mathfrak{C}^{s,\lambda}_{\delta+1,\varLambda^{q+1}} \cap 
\mathcal{S}_{d^{q+1}}$, 
	$g \in  \mathfrak{C}^{s,\lambda}_{\delta+1,\varLambda^{q-1}} \cap 
\mathcal{S}_{(d^{q-2})^*}$, 
satisfying \eqref{eq.image}.
\end{itemize}
\end{theorem}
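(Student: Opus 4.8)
The plan is to reduce the first-order overdetermined system $(d^q,(d^{q-1})^*)$ to the componentwise Laplacian and then to invoke a weighted-potential calculus. The decisive algebraic fact is that on $\mathbb{R}^n$ the Hodge Laplacian
\[
\Delta_q = (d^q)^* d^q + d^{q-1}(d^{q-1})^*
\]
acts on $q$-forms simply as the scalar Laplacian on each coefficient, so any solution $u$ of the system with data $(f,g)$ satisfies
\[
\Delta_q u = (d^q)^* f + d^{q-1} g =: F .
\]
First I would record that $(d^q,(d^{q-1})^*)$ is Douglis--Nirenberg elliptic (its symbol is injective, which is precisely the statement that $\Delta_q$ is its ``squared'' operator), so that the operator admits a parametrix and, once continuity between the weighted scales is in hand, has finite-dimensional kernel and closed range; the continuity itself is immediate from the definition of $\|\cdot\|_{C^{s,\lambda}_\delta}$ and was established in \cite{SidShl19}.

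Next I would set up the integral representation using the fundamental solution $\mathcal{E}_n$ of the Laplacian (the Newton kernel $c_n|x|^{2-n}$ for $n\geqslant 3$, the logarithmic kernel for $n=2$), defining the potential $\mathcal{P}$ componentwise. The core analytic input is the behaviour of $\mathcal{P}$ on the weighted H\"older scales, i.e.\ the H\"older counterpart of the McOwen--Nirenberg--Walker theory of \cite{NireWalk73}, \cite{McOw79}: $\mathcal{P}$ maps $C^{s-1,\lambda}_{\delta+2}$ continuously into $C^{s+1,\lambda}_{\delta}$ away from the critical weights, and the failure of surjectivity of $\Delta_q$ is read off from the asymptotic expansion of $\mathcal{P}F$ at infinity. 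The critical values are exactly those for which a homogeneous term of the form $|x|^{2-n+k}$ enters the scale, that is $\delta+1-n\in\mathbb{Z}_+$, which is why that case is excluded. Injectivity for all admissible $\delta>0$ is then immediate: if $(d^q,(d^{q-1})^*)u=0$ then $u$ is closed and co-closed, hence $\Delta_q u=0$, so the coefficients of $u$ are harmonic polynomials; but an element of $C^{s+1,\lambda}_\delta$ with $\delta>0$ must decay at infinity, forcing $u\equiv 0$.

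For the cokernel I would run the potential backwards: form $F=(d^q)^* f + d^{q-1} g$ and solve $\Delta_q u = F$ by $u=\mathcal{P}F$ after subtracting the finitely many obstructing terms. The key bridge is to upgrade a solution of $\Delta_q u = F$ to a solution of the system itself. Setting $\phi=d^q u - f$ and $\psi=(d^{q-1})^* u - g$, the compatibility hypotheses $f\in\mathcal{S}_{d^{q+1}}$, $g\in\mathcal{S}_{(d^{q-2})^*}$ together with $\Delta_q u=F$ give $d^{q+1}\phi=0$, $(d^{q-2})^*\psi=0$ and $(d^q)^*\phi+d^{q-1}\psi=0$; applying $(d^{q-1})^*$ and integrating by parts (legitimate because $\phi,\psi$ decay) yields $d^{q-1}\psi=0$, hence $(d^q)^*\phi=0$, so both $\phi$ and $\psi$ are harmonic with positive decay and therefore vanish. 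Thus the range of the operator coincides with the range of $\Delta_q$ restricted to compatible pairs. On $0<\delta<n-1$ there are no obstructing harmonic polynomials — constants $h$ contribute nothing since $d^q h=(d^{q-1})^* h=0$, and nonconstant polynomials produce divergent pairings in this range — so $\mathcal{P}$ is a two-sided inverse and the operator is an isomorphism. On $n-1+m<\delta<n+m$ the obstruction is the $L^2$-pairing of $F$ against harmonic polynomial forms $h$ of degree $\leqslant m+1$; integrating by parts,
\[
(F,h)_{L^2 _{\varLambda^{q}}({\mathbb R}^n)}
=(f,d^q h)_{L^2 _{\varLambda^{q+1}}({\mathbb R}^n)}
+(g,(d^{q-1})^* h)_{L^2 _{\varLambda^{q-1}}({\mathbb R}^n)},
\]
which is exactly \eqref{eq.image}. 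A dimension count fixes the degree bound: the integrand $f\cdot d^q h$ decays like $|x|^{m-\delta-1}$, integrable iff $\delta>n-1+m$, so the boundary contributions drop precisely on this interval and the obstructing space is $\mathcal{H}_{\leqslant m+1,\varLambda^q}$.

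Finally, the assertion for \eqref{eq.DR.hat} follows verbatim on the separable subspace $\mathfrak{C}^{s,\lambda}_{\delta,\varLambda^q}$, since the potential $\mathcal{P}$, the finite-rank projections and the integration-by-parts identities all preserve the closure of $\mathcal{D}(\mathbb{R}^n)$. I expect the principal difficulty to be the sharp weighted-H\"older estimates for $\mathcal{P}$ near the critical weights — pinning down the exact asymptotic expansion that produces the finite-dimensional obstruction space and the correct harmonic-polynomial degree — rather than the essentially formal algebra relating the system to $\Delta_q$.
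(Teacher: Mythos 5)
Your proposal is correct in outline and follows essentially the same route as the paper, which quotes this theorem from \cite{SidShl19} and whose stated tools are exactly the Newton-type potentials $\varPhi_q$, $\hat\varPhi_q$ together with the harmonic-polynomial expansion of $e(x-y)$ in the style of \cite{NireWalk73}, \cite{McOw79}, the obstruction space $\mathcal{H}_{\leqslant m+1,\varLambda^{q}}$ being read off from the truncated kernels $\varPhi_{m,q}$. The only organizational difference is that the paper's kernels $\phi_q=(d^{n-q-1})^*_y e_q$ and $\hat\phi_q=d^{n-q}_y e_q$ are designed so that the potentials solve the first-order system directly (Lemma \ref{l.phi.Hoelder.d}), whereas you pass through $\Delta_q u=F$ and then eliminate the residuals $\phi,\psi$ by a harmonicity-plus-decay argument; this is the same computation carried out in a different order.
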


The main tools in the proof of the theorem were the following defined integral operators.
Namely, let
\begin{equation*}
(E_q u) (x) = \int\limits_{\mathbb{R}^n} u(y) \wedge e_q (x,y)\, \, 
\end{equation*}
for a suitable $q$-form $u$, where 
$$
e_q (x,y) = \sum_{|I|=q} e (x-y) \, (\star dy_I) \, dx_I, \,\, 
   e (x)
 = \left\{ \begin{array}{lcl}
             \displaystyle
             \frac{1}{\pi} \ln |x|,
           & \mbox{for}
           & n = 2,
\\
             \displaystyle
             \frac{1}{\sigma_n} \frac{|x|^{2-n}}{2-n},
           & \mbox{for}
           & n \geqslant 3,
           \end{array}
   \right.
$$
that is, $e$ is a standard fundamental solution of convolution type for the Laplace operator in $\mathbb{R}^n$, and $e_q$ is its analogue for action on exterior differential forms
(here $\sigma_n$ is the area of the unit sphere in $\mathbb{R}^n$). 
Now, for $f \in C^{s,\lambda}_{ \delta+1,\varLambda^{q+1}}$, 
$g \in C^{s,\lambda}_{\delta+1, \varLambda^{q-1}} $, we define
\begin{equation} 
\label{eq.volume.potential.d} 
(\varPhi _q \, f) (x) =
\int\limits_{{\mathbb R}^n} f (y)  \wedge \phi_q (x,y), 
\, \, (\hat \varPhi _q \, g) (x) =
\int\limits_{{\mathbb R}^n} g (y)  \wedge \hat \phi_q (x,y),
\end{equation}
where
\begin{equation*}
\phi_q (x,y)= (d^{n-q-1})^*_y e_q (x,y) , \,\, 
\hat \phi_q (x,y) =d^{n-q}_y e_q (x,y) , \quad n\geqslant 2.
\end{equation*}

Next, we use the following decomposition of the fundamental solution of the Laplace operator in homogeneous harmonic polynomials 
$\left\{ h_k^{(j)}\right\}$, forming an orthonormal basis in $L^2 (\partial B_1)$ on the unit sphere
$\partial B_1$ in ${\mathbb R}^n$:
\begin{equation*}
   e (x-y)
 = e (x-0)
 - \sum_{k=1}^\infty
   \sum_{j=1}^{J (k)}
   \frac{h_k^{(j)} (x) h_k^{(j)} (y)}{(n+2k-2) |x|^{n+2k-2}},
\end{equation*}
here $n \geqslant 2$, $k$ is the degree of homogeneity of the polynomial, $j$ is the number of a homogeneous polynomial of degree $k$ in the basis, and the series converges uniformly together with all derivatives on compact sets of their cone
 $\left\{ |x| > |y| \right\}$ in $\mathbb{R}^{2n}$
   (see for example  \cite{McOw79}, \cite{Shla92}). We put	
\begin{equation*}
\phi_{m,q} (x,y) = \phi_q (x,y) +  \sum_{\left|I\right|=q}\sum_{k=1}^{m+1} \sum_{j=1}^{J (k)}
\frac{h_k^{(j)} \left(x\right) \left(d^{n-q-1}\right)^*_y \left(h_k^{(j)} (y) (\star dy_I)\right)  dx_I}{(n+2k-2){
\vartheta}^{n+2k-2}(x)}, 
\end{equation*}
\begin{equation*} 
\hat \phi_{m,q} (x,y) = \hat \phi_q (x,y) +  \sum_{\left|I\right|=q}\sum_{k=1}^{m+1} \sum_{j=1}^{J (k)}
\frac{h_k^{(j)} \left(x\right) \left(d^{n-q} \right)_y 
\left(h_k^{(j)} (y) (\star dy_I)\right) dx_I  }{(n+2k-2){\vartheta}^{n+2k-2}(x)}, 
\end{equation*}
where 
$m\in {\mathbb Z}_+$ and
${\vartheta} (x)$ is some smooth function such that
${\vartheta} (x) = |x|$ for $|x| \geqslant 2$ and $0< 1/{\vartheta}(x) \leqslant 1$. 

Then we define the potential $\varPhi_{m,q}$ as follows
\begin{equation*}
 \varPhi_{m,q} f (x)=\int\limits_{{\mathbb R}^n} f (y) \wedge  \phi_{m,q} (x,y)  ,
\,\, 
\hat  \varPhi_{m,q} g (x)=\int\limits_{{\mathbb R}^n} g (y) \wedge \hat \phi_{m,q} (x,y) . 
\end{equation*}

The main property of 
these potentials is the following.
\begin{lemma}
\label{l.phi.Hoelder.d}
If $\delta > 0$, $\lambda \in (0,1)$, then the potentials $\varPhi_q f$, $\hat \varPhi_q g$,  
given by formula (\ref{eq.volume.potential.d}), satisfy, in the sense of distributions on $\mathbb{R}^n$,  equalities 
\begin{align*} 
   d^q (\varPhi_q f) = f, && (d^{q-1})^* (\varPhi_q f) = 0, &&
	(d^{q-1})^* (\hat \varPhi_q g) = g, && (d^{q}) (\hat \varPhi_q g) = 0 
	\end{align*}
for all 
   $f \in C^{0,\lambda}_{\delta+1,\varLambda^{q+1}} 
	\cap \mathcal{S}_{d^{q+1}}$, 
	$g \in C^{0,\lambda}_{\delta+1, \varLambda^{q-1}}
	\cap \mathcal{S}_{(d^{q-2})^*}$.
Moreover, if $0 < \delta < n-1$, then the potentials (\ref{eq.volume.potential.d}) induce the bounded linear operators 
\begin{align*}
   \varPhi_q : C^{0,\lambda}_{\delta+1,\varLambda^{q+1}} 
	\to C^{1,\lambda}_{\delta, \varLambda^{q}} 
	, 
&&
   \hat \varPhi_q : C^{0,\lambda}_{\delta+1, \varLambda^{q-1}} 
	\to C^{1,\lambda}_{\delta, \varLambda^{q}} 
.
\end{align*}

\end{lemma}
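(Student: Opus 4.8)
The plan is to reduce both assertions to the classical properties of the Newton potential $E_q \colon u \mapsto \int_{\mathbb{R}^n} u(y)\wedge e_q(x,y)$ on forms, and then to transport them through the Hodge--star duality that is built into the kernels $\phi_q$ and $\hat\phi_q$. The starting point is the fundamental solution property $\Delta E_q u = u$ in $\mathcal{D}'(\mathbb{R}^n)$, which holds componentwise because $e$ is a fundamental solution of the Laplacian; moreover, being a convolution, $E_q$ commutes with the constant--coefficient operators $d$ and $d^{*}$, so that $d^{q} E_q = E_{q+1} d^{q}$ and $(d^{q})^{*} E_{q+1} = E_{q} (d^{q})^{*}$.

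The first genuine step, and the most delicate piece of bookkeeping, is to identify $\varPhi_q$ and $\hat\varPhi_q$ with the two Hodge halves of $E$. Using the translation identity $\partial_{y_j} e(x-y) = -\partial_{x_j} e(x-y)$ to move the $y$--derivatives in $\phi_q = (d^{n-q-1})^{*}_y e_q$ and $\hat\phi_q = d^{n-q}_y e_q$ onto the $x$--variable, together with the duality $d^{*} = \pm \star d \star$ and the explicit form $e_q = \sum_{|I|=q} e(x-y)\,(\star dy_I)\,dx_I$, I would evaluate the wedge pairing in $y$ and obtain, with the signs dictated by the $y\to x$ conversion and the chosen Hodge normalisation, the representations $\varPhi_q f = -(d^{q})^{*} E_{q+1} f$ and $\hat\varPhi_q g = -\,d^{q-1} E_{q-1} g$.

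Granting these representations, the four distributional identities follow at once from the de Rham--Hodge decomposition $d^{q}(d^{q})^{*} + (d^{q+1})^{*} d^{q+1} = -\Delta$ on $(q+1)$--forms (for the standard codifferential) together with $d\,d = 0$ and $d^{*} d^{*} = 0$. Indeed, for closed $f$ (so $d^{q+1} f = 0$) one computes $d^{q}\varPhi_q f = -\,d^{q}(d^{q})^{*} E_{q+1} f = \Delta E_{q+1} f + (d^{q+1})^{*} E_{q+2}\, d^{q+1} f = f$, while $(d^{q-1})^{*}\varPhi_q f = -(d^{q-1})^{*}(d^{q})^{*} E_{q+1} f = 0$ since $(d^{q-1})^{*}(d^{q})^{*} = (d^{q} d^{q-1})^{*} = 0$; the two identities for $\hat\varPhi_q$ follow symmetrically, now invoking the hypothesis $(d^{q-2})^{*} g = 0$. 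The assumption $\delta > 0$ enters here only to guarantee absolute convergence of the defining integrals at infinity.

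The main work, and the step I expect to be the principal obstacle, is the boundedness statement in the range $0 < \delta < n-1$. Since $\phi_q$ carries exactly one derivative of $e$, it has the singularity $|x-y|^{1-n}$ on the diagonal, so that after one further differentiation $\varPhi_q$ becomes a Calder\'on--Zygmund--type operator; the claim $\varPhi_q f \in C^{1,\lambda}_{\delta,\varLambda^{q}}$ therefore rests on weighted singular--integral estimates in the spirit of \cite{NireWalk73} and \cite{McOw79}. Concretely I would split each integral into the near--diagonal region $|x-y| \leqslant |x|/2$ and its complement: on the former the standard H\"older bounds for kernels homogeneous of degree $1-n$ reproduce both the local piece on $\overline U$ and the weighted seminorm $\langle\cdot\rangle_{\lambda,\delta,\mathbb{R}^n}$ of the first derivatives, while the far region is controlled by pairing the decay of $f$ against the kernel. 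The threshold $n-1$ is precisely the first critical weight for an operator of order $-1$, namely the value at which the degree--zero obstruction of Theorem~\ref{t.weight.Hoelder.d} would appear, so that for $0 < \delta < n-1$ the far--field integral converges absolutely with exactly the weight gain $w^{\delta}$. The genuinely technical point is to verify the weighted H\"older seminorm of $\partial^\alpha \varPhi_q f$, tracking the factor $w^{\delta+1+\lambda}(x,y)$ simultaneously with the modulus of continuity.
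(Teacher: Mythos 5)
The paper does not actually prove this lemma: it is recalled verbatim from \cite{SidShl19} (where it appears together with \cite[Lemma 5]{SidShl19} and the surrounding estimates), so there is no in-text proof to compare against. Judged on its own merits, your outline follows the standard and almost certainly intended route: identify $\varPhi_q$ and $\hat\varPhi_q$ with $-(d^{q})^{*}E_{q+1}$ and $d^{q-1}E_{q-1}$ respectively, then read off the four identities from $d\circ d=0$, $d^{*}\circ d^{*}=0$ and the Weitzenb\"ock-type identity $dd^{*}+d^{*}d=-\Delta$ applied to the Newton potential, and finally reduce the mapping property to the weighted singular-integral estimates of \cite{NireWalk73}, \cite{McOw79}. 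The algebra and the identification of the critical weight $\delta=n-1$ are correct.

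There is, however, one genuine gap in the reduction. For $f\in C^{0,\lambda}_{\delta+1,\varLambda^{q+1}}$ the potential $\varPhi_q f$ converges absolutely for every $\delta>0$, because $\phi_q$ carries one derivative of $e$ and hence decays like $|x-y|^{1-n}$; but the intermediate object $E_{q+1}f$ on which your factorization rests involves the undifferentiated kernel $e(x-y)\sim|y|^{2-n}$ (a logarithm for $n=2$), and the integral $\int |y|^{2-n}\,w^{-\delta-1}(y)\,dy$ converges only for $\delta>1$. So on the range $0<\delta\leqslant 1$, which the lemma explicitly covers, the identity $\varPhi_q f=-(d^{q})^{*}E_{q+1}f$ cannot be taken literally, and the computation $d^{q}\varPhi_qf=\Delta E_{q+1}f+(d^{q+1})^{*}E_{q+2}d^{q+1}f$ manipulates divergent expressions. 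This is repairable --- e.g.\ replace $f$ by $\chi_R f$ with a cutoff $\chi_R$, for which the factorization is legitimate, note that $d(\chi_Rf)=d\chi_R\wedge f$ contributes an error term of size $O(R^{-\delta-1})$ at any fixed $x$, and pass to the limit; or test $d^{q}\varPhi_qf$ directly against forms in $\mathcal{D}(\mathbb{R}^n,\varLambda^{q+1})$ and integrate by parts at the kernel level (density of $\mathcal{D}$ is \emph{not} available in $C^{s,\lambda}_{\delta}$, which is precisely why the paper introduces $\mathfrak{C}^{s,\lambda}_{\delta}$, so the limiting argument must be done in the topology of distributions, not in the norm). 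As you anticipate, the boundedness statement is only sketched and its substance lives in the cited weighted estimates; that part of the plan is sound but not self-contained.
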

Let us extract from Theorem \ref{t.weight.Hoelder.d} and 
Lemma \ref{l.phi.Hoelder.d} a rather expected consequence; we will use to describe the cohomology spaces of  complex \eqref{complex1}.
\begin{corollary}
\label{t.weight.Hoelder.d.cor}
Let $n \geqslant 2$, $s\in {\mathbb Z}_+$,  $0 < \lambda < 1$, 
$\delta > n/2$ and 
$f \in Z^{s,\lambda}_{\delta+1, \varLambda^{q+1}}$. If the form $u \in C^{s+1,\lambda}_{\delta, \varLambda^{q}}$
is a solution to the equation $du=f$,
then there is a form $v \in C^{s+1,\lambda}_{\delta, \varLambda^{q}}$,
satisfying 
\begin{equation}
\label{uchch}
\begin{cases}
dv=f, \\
d^*v=0.
\end{cases}
\end{equation}
\end{corollary}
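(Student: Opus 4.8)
The plan is to correct the given primitive $u$ by a closed potential that carries exactly the co-differential of $u$, thereby enforcing $d^*v=0$ while leaving $du=f$ untouched. Concretely, I set $g:=(d^{q-1})^*u$. Since $(d^{q-2})^*(d^{q-1})^*=0$, the form $g$ lies in $C^{s,\lambda}_{\delta+1,\varLambda^{q-1}}\cap\mathcal{S}_{(d^{q-2})^*}$, so $\hat\varPhi_q g$ is defined, and I put
\[
v:=u-\hat\varPhi_q g .
\]
By the differential identities of Lemma \ref{l.phi.Hoelder.d} one has $(d^{q-1})^*\hat\varPhi_q g=g$ and $d^q\hat\varPhi_q g=0$ in the sense of distributions, whence $d^*v=(d^{q-1})^*u-g=0$ and $dv=d^qu=f$. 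Thus $v$ already solves the system \eqref{uchch}, and a short Hodge-type computation shows that $v$ coincides with the co-exact primitive $\varPhi_q f$, i.e. with $d^*\Delta^{-1}f$. The entire content of the corollary is therefore reduced to the single point that $v$ belongs to the prescribed space $C^{s+1,\lambda}_{\delta,\varLambda^q}$; as $u$ does, this is equivalent to proving $\hat\varPhi_q g\in C^{s+1,\lambda}_{\delta,\varLambda^q}$.

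In the isomorphism range $0<\delta<n-1$ this membership is immediate. Either I invoke the boundedness assertion of Lemma \ref{l.phi.Hoelder.d} (applied degreewise, raising smoothness by one) to place $\hat\varPhi_q g$ in $C^{s+1,\lambda}_{\delta,\varLambda^q}$ directly; or I apply Theorem \ref{t.weight.Hoelder.d}, noting that the pair $(f,0)$ lies in the target of \eqref{eq.DR} ($f$ is a cocycle, so $f\in\mathcal{S}_{d^{q+1}}$), so that surjectivity produces a unique $v\in C^{s+1,\lambda}_{\delta,\varLambda^q}$ with $\bigl(d^qv,(d^{q-1})^*v\bigr)=(f,0)$. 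The hypothesis $\delta>n/2$ enters here to guarantee $u\in L^2_{\varLambda^q}(\mathbb{R}^n)$ and the finiteness of all pairings in \eqref{eq.image}, which is what legitimises identifying $v$ with the $L^2$ Hodge primitive $d^*\Delta^{-1}f$.

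The step I expect to be the main obstacle is the remaining regime $\delta\geq n-1$, where \eqref{eq.DR} is only injective with closed image and the decay of $\hat\varPhi_q g$ is controlled by its multipole tail. There the required membership $v\in C^{s+1,\lambda}_{\delta,\varLambda^q}$ is precisely the assertion that $(f,0)$ lies in the image of \eqref{eq.DR}, which by Theorem \ref{t.weight.Hoelder.d} is equivalent to the vanishing of the moments $(f,d^qh)_{L^2_{\varLambda^{q+1}}(\mathbb{R}^n)}=0$ for every $h\in\mathcal{H}_{\leq m+1,\varLambda^q}$. The assumption that $f$ is a coboundary is what must be exploited: the existence of $u$ certifies, via \eqref{eq.image}, only that $(f,(d^{q-1})^*u)$ lies in the image, and one must upgrade this to the vanishing of the individual moments. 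I flag this as genuinely delicate, because the naive route — integrating $(f,d^qh)=(d^qu,d^qh)$ by parts (the boundary spheres being harmless since $\delta>n/2$) and using $\Delta h=0$ — merely returns the identity $(f,d^qh)=-\bigl((d^{q-1})^*u,(d^{q-1})^*h\bigr)$ and is thus circular with \eqref{eq.image}. Resolving this cleanly is the crux, and it is where the coboundary hypothesis together with the $L^2$ bound $\delta>n/2$ must be combined in an essential, non-formal way.
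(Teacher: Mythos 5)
Your construction of the candidate $v$ is sound as far as it goes, and your diagnosis of where the difficulty lies is accurate --- but the difficulty is not resolved, so the proposal has a genuine gap precisely at its self-declared crux. For $n-1+m<\delta<n+m$ you correctly reduce the corollary to the membership $\hat\varPhi_q\bigl((d^{q-1})^*u\bigr)\in C^{s+1,\lambda}_{\delta,\varLambda^{q}}$, equivalently to the vanishing of the individual moments $(f,d^qh)_{L^2}$ for all $h\in\mathcal{H}_{\leqslant m+1,\varLambda^{q}}$, and you correctly observe that integrating by parts against harmonic $h$ only reproduces \eqref{eq.image} and is therefore circular. But you then stop, asserting only that the coboundary hypothesis and the bound $\delta>n/2$ ``must be combined in an essential, non-formal way.'' That missing combination is the entire content of the corollary in the nontrivial range, so the argument is incomplete.

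The paper closes this gap by importing a Hodge-type decomposition from \cite[Corollary 1]{SidShl19}: for $\delta>n/2$ every $u\in C^{s+1,\lambda}_{\delta,\varLambda^{q}}$ splits as
\begin{equation*}
u=d^*\hat\varPhi u+d\varPhi u,
\end{equation*}
and, crucially, the \emph{composite} operator $d^*\hat\varPhi$ is bounded on $C^{s+1,\lambda}_{\delta,\varLambda^{q}}$ on this whole range of $\delta$ --- even though $\hat\varPhi$ alone is not, which is exactly the obstruction you ran into by trying to estimate $\hat\varPhi_q g$ by itself. Applying $d$ to the decomposition gives $f=du=d\,(d^*\hat\varPhi u)$, so $v:=d^*\hat\varPhi u$ solves \eqref{uchch} and lies in $C^{s+1,\lambda}_{\delta,\varLambda^{q}}$ by the quoted continuity; as a by-product this shows that $(f,0)$ lies in the image of \eqref{eq.DR}, i.e.\ the moments you could not kill do in fact vanish. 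To complete your own variant you would need the same external input (or an equivalent one): an a priori bound on $d^*\hat\varPhi$, or on $\hat\varPhi_q\circ(d^{q-1})^*$, as an operator on the weighted space for $\delta$ beyond $n-1$; this is not derivable from Lemma \ref{l.phi.Hoelder.d} or Theorem \ref{t.weight.Hoelder.d} as stated.
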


\begin{proof}
Let the form $u$
satisfy the conditions of the corollary. Then (see \cite[Corollary 1]{SidShl19}) the following decomposition takes place:
\begin{equation}
\label{uch}
u = d^* \hat \varPhi u + d \varPhi u.
\end{equation}

Applying the operator $d$ to identity (\ref{uch}) and taking into account that $d\circ d =0$, we see that $v=d^*\hat \varPhi u$ is also a solution to the equation $du=f$.
And since $d^* \circ d^* =0$,
then $v$ is a solution to the system (\ref{uchch}).
Since the operator $d^*\hat \varPhi $ is continuous in $C^{s+1,\lambda}_{\delta, \varLambda^{q}}$ (also, by Corollary 1), it follows that 
$v \in C^{s+1,\lambda}_{\delta, \varLambda^{q}}$.
\end{proof}

Finally, we can go directly to the description of the cohomology groups of complex 
(\ref{complex1}). It follows from Theorem \ref{t.weight.Hoelder.d} that 
they are trivial for 
$0 < \delta < n-1$. Therefore, below we will consider the case when $n+m-1 < \delta < n+m$.

\begin{theorem}
\label{kohom1}
Let $n \geqslant 2$, $s\in {\mathbb Z}_+$,  $0 < \lambda < 1$ and $n+m-1 < \delta < n+m$. Then the cohomology groups of complex \eqref{complex1} are finite-dimensional and isomorphic to the image of the operator $d\left(\varPhi-\varPhi_m\right)$, acting from $Z^{s,\lambda}_{\delta+1, \varLambda^{q+1}}$
to $Z^{s,\lambda}_{\delta+1, \varLambda^{q+1}}$.
\end{theorem}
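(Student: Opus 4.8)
The plan is to realize each cohomology group through the single operator $P:=d\,(\varPhi_q-\varPhi_{m,q})$ and to show that the canonical projection $Z^{s,\lambda}_{\delta+1,\varLambda^{q+1}}\to H^{s,\lambda}_{\delta+1,\varLambda^{q+1}}$, restricted to $\operatorname{Im}P$, is an isomorphism onto a finite-dimensional space. First I would record a decomposition. For $n+m-1<\delta<n+m$ the modified potential acts boundedly as $\varPhi_{m,q}\colon C^{s,\lambda}_{\delta+1,\varLambda^{q+1}}\to C^{s+1,\lambda}_{\delta,\varLambda^{q}}$, which is exactly what subtracting the harmonic terms of degree $\leqslant m+1$ buys us. Combining the identity $d^q\varPhi_q f=f$ from Lemma~\ref{l.phi.Hoelder.d} with the definition of $\varPhi_{m,q}$, I obtain for every cocycle $f\in Z^{s,\lambda}_{\delta+1,\varLambda^{q+1}}$ the equality
\[
f=d\,\varPhi_{m,q}f+d\,(\varPhi_q-\varPhi_{m,q})f=d\,\varPhi_{m,q}f+Pf .
\]
The first summand is a coboundary, being $d$ of an element of $C^{s+1,\lambda}_{\delta,\varLambda^{q}}$, so $[f]=[Pf]$ in cohomology; moreover $Pf=f-d\,\varPhi_{m,q}f$ again lies in $Z^{s,\lambda}_{\delta+1,\varLambda^{q+1}}$, so $P$ maps this space into itself and $\operatorname{Im}P\to H^{s,\lambda}_{\delta+1,\varLambda^{q+1}}$ is surjective.

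The image is finite-dimensional because $\varPhi_q-\varPhi_{m,q}$ is a finite-rank operator. Indeed, from its defining formula its value at $f$ is the finite sum over $|I|=q$, $1\leqslant k\leqslant m+1$, $1\leqslant j\leqslant J(k)$ of the fixed forms $\vartheta^{\,2-n-2k}(x)\,h_k^{(j)}(x)\,dx_I$ with scalar coefficients
\[
c_{I,k,j}(f)=\int_{\mathbb{R}^n} f(y)\wedge (d^{n-q-1})^*_y\big(h_k^{(j)}(y)(\star dy_I)\big),
\]
so $\operatorname{Im}P$ sits inside the $d$-image of a fixed finite-dimensional space of forms.

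The heart of the matter is injectivity, i.e. $\operatorname{Im}P\cap B^{s,\lambda}_{\delta+1,\varLambda^{q+1}}=\{0\}$. Here I would first turn being a coboundary into orthogonality conditions: since $\delta>n/2$, Corollary~\ref{t.weight.Hoelder.d.cor} upgrades any solution of $d^q u=f$ to a solution $v$ of the system $d^qv=f$, $(d^{q-1})^*v=0$; membership of the pair $(f,0)$ in the image of \eqref{eq.DR} then forces, through \eqref{eq.image}, that $(f,d^qh)_{L^2}=0$ for all $h\in\mathcal{H}_{\leqslant m+1,\varLambda^{q}}$, and Theorem~\ref{t.weight.Hoelder.d} shows these conditions are also sufficient. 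Next, using $d^*=\pm\star d\,\star$ and $\star\star=\pm\operatorname{id}$, I would rewrite each correction coefficient as $c_{I,k,j}(f)=\pm\,(f,d^q(h_k^{(j)}dy_I))_{L^2}$ with $h_k^{(j)}dy_I\in\mathcal{H}_{\leqslant m+1,\varLambda^{q}}$. Consequently, if $Pf\in B^{s,\lambda}_{\delta+1,\varLambda^{q+1}}$ then $f=d\,\varPhi_{m,q}f+Pf$ is itself a coboundary, so every pairing $(f,d^qh)_{L^2}$ vanishes, whence all $c_{I,k,j}(f)=0$ and therefore $Pf=0$. This yields injectivity, and together with surjectivity and finite-dimensionality completes the claimed isomorphism $H^{s,\lambda}_{\delta+1,\varLambda^{q+1}}\cong\operatorname{Im}\big(d(\varPhi-\varPhi_m)\big)$.

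I expect the main obstacle to be the coboundary criterion of the previous paragraph: passing from the single equation $d^qu=f$ to the overdetermined system $(d^q,(d^{q-1})^*)$ in order to invoke the image description \eqref{eq.image}, and then matching the orthogonality functionals appearing there with the finite-rank coefficients $c_{I,k,j}$. The Hodge-duality bookkeeping in that last identification — the signs produced by $\star$ and the degree shifts between $q$, $q+1$ and $n-q$ — is the delicate computational point, whereas the finite-dimensionality and the surjectivity are comparatively routine.
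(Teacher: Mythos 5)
Your proposal is correct and follows essentially the same route as the paper: both rest on the identity $f=d\varPhi_m f+d(\varPhi-\varPhi_m)f$ for cocycles, the characterization of coboundaries via Corollary \ref{t.weight.Hoelder.d.cor} together with the orthogonality conditions \eqref{eq.image}, and the finite rank of $\varPhi-\varPhi_m$ (which the paper relegates to the remark following the theorem). The only cosmetic difference is that you invert the direction of the isomorphism, restricting the quotient projection to $\operatorname{Im}P$ instead of defining the map $[g]\mapsto d(\varPhi-\varPhi_m)g$, and you make explicit the Hodge-duality identification of the correction coefficients with the pairings $(f,d^qh)_{L^2}$ that the paper passes over with ``and hence.''
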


\begin{proof}
First, we prove that $d\left(\varPhi-\varPhi_m\right)$ is a continuous operator. 
Since $d$
acts continuously from $C^{s+1,\lambda}_{\delta, \varLambda^{q}}$
to $C^{s,\lambda}_{\delta+1, \varLambda^{q+1}}$,
and $\varPhi_m$
acts continuously from $C^{s,\lambda}_{\delta+1, \varLambda^{q+1}}$
to $C^{s+1,\lambda}_{\delta, \varLambda^{q}}$ (see \cite{SidShl19}),
we have
\begin{equation*}
\left\| d\left( \varPhi_m f \right) \right\|_
{C^{s,\lambda}_{\delta+1, \varLambda^{q+1}}} \leqslant
C_1 \left\| \varPhi_m f \right\|_
{C^{s+1,\lambda}_{\delta, \varLambda^{q}}} \leqslant
C_1 C_2\left\| f \right\|_
{C^{s,\lambda}_{\delta+1, \varLambda^{q+1}}}.
\end{equation*}

It follows from Theorem \ref{t.weight.Hoelder.d}
that the operator $\varPhi$
for $n+m-1 < \delta < n+m$
does not act continuously from $C^{s,\lambda}_{\delta+1, \varLambda^{q+1}}$ to $C^{s+1,\lambda}_{\delta, \varLambda^{q}}$.
But since the operator $\varPhi$ under the conditions of the theorem being proved is considered on $Z^{s,\lambda}_{\delta+1, \varLambda^{q+1}}$, then $d\varPhi f=f$. Thus, we get
\begin{multline*}
\left\| d\left(\varPhi f\right) - d\left(\varPhi_m f\right) \right\|_
{C^{s,\lambda}_{\delta+1, \varLambda^{q+1}}} \leqslant
\left\| d\left(\varPhi f\right) \right\|_
{C^{s,\lambda}_{\delta+1, \varLambda^{q+1}}} + \\
+\left\| d\left(\varPhi_m f\right) \right\|_
{C^{s,\lambda}_{\delta+1, \varLambda^{q+1}}} 
\leqslant \left( 1+C_1 C_2 \right) \left\|f\right\|_
{C^{s,\lambda}_{\delta+1, \varLambda^{q+1}}}. 
\end{multline*}

The next step in the proof is to establish the equality of spaces
$C^{s,\lambda}_{\delta+1, \varLambda^{q+1}} \cap \mathcal {S}_{d\left(\varPhi-\varPhi_m\right)}$  and
$B^{s,\lambda}_{\delta+1, \varLambda^{q+1}}$.
Suppose
\begin{equation}
\label{imimim}
d\left(\varPhi-\varPhi_m\right)f = 0.
\end{equation}

As we noted above, $d\varPhi f=f$. Then it follows from equation (\ref{imimim}) that $f$
can be represented as $f=d\varPhi_m f$.
Since the operator $\varPhi_m$
acts continuously (see \cite[Lemma 5]{SidShl19}) from $C^{s,\lambda}_{\delta+1, \varLambda^{q+1}}$
to $C^{s+1,\lambda}_{\delta, \varLambda^{q}}$, then $f \in B^{s,\lambda}_{\delta+1, \varLambda^{q+1}}$.

Suppose now that $f \in B^{s,\lambda}_{\delta+1, \varLambda^{q+1}}$. Then there is a form $u \in C^{s+1,\lambda}_{\delta, \varLambda^{q}}$
satisfying the condition $f=du$.
Corollary \ref{t.weight.Hoelder.d.cor} implies that in this case there also exists $v \in C^{s+1,\lambda}_{\delta, \varLambda^{q}}$,
satisfying system (\ref{uchch}).
Then it follows from Theorem \ref{t.weight.Hoelder.d} that the form $f$
is orthogonal to any element of the set ${\mathcal H}^{q}_{\leqslant m+1}$,
and hence $d\left(\varPhi-\varPhi_m\right)f = 0$.

The last part of the proof consists in constructing the desired isomorphism. Consider a map $\omega$
that assigns to each class $\left[g\right]$ in $H^{s,\lambda}_{\delta+1, \varLambda^{q+1}}$ an element in the image of an operator $d(\varPhi-\varPhi_m)$, 
acting from $Z^{s,\lambda}_{\delta+1, \varLambda^{q+1}}$ to $Z^{s,\lambda}_{\delta+1, \varLambda^{q+1}}$. Let this map acts according to rule
\begin{equation*}
\omega\left( \left[g\right] \right) = d(\varPhi-\varPhi_m)g,
\end{equation*}
where $g$ is some representative of the class $\left[g\right]$.

Let us show that $\omega$
does not depend on the choice of a representative of the class $\left[g\right]$. Let $g_1, g_2 \in \left[g\right]$ and $g_1\neq g_2$, 
then $g_2$ can be represented in the form $g_2=g_1+du$, where $du$ is an element of the space $B^{s,\lambda}_{\delta+1, \varLambda^{q+1}}$. We obtain
\begin{equation*}
d(\varPhi-\varPhi_m)g_2=
d(\varPhi-\varPhi_m)(g_1+du)=
d(\varPhi-\varPhi_m)g_1+
d(\varPhi-\varPhi_m)(du).
\end{equation*}

But $du$ is contained in the set $B^{s,\lambda}_{\delta+1, \varLambda^{q+1}}$, which means that it is also contained in $C^{s,\lambda}_{\delta+1, \varLambda^{q+1}} \cap \mathcal {S}_{d\left(\varPhi-\varPhi_m\right)}$ . Thus 
\begin{equation*}
d(\varPhi-\varPhi_m)g_2=
d(\varPhi-\varPhi_m)g_1.
\end{equation*}

Next, we turn to the proof of the injectivity of the map $\omega$. 
Let $g_1 \in \left[g_1\right]$, $g_2 \in \left[g_2\right]$ and $\left[g_1\right] \neq \left[g_2\right]$. 
Suppose $\omega\left(\left[g_1\right]\right) = \omega\left(\left[g_2\right]\right)$. We obtain
\begin{eqnarray*}
&\omega\left(\left[g_1\right]\right) = d(\varPhi-\varPhi_m)g_1 = g_1 + d\varPhi_m g_1,& \\
&\omega\left(\left[g_2\right]\right) = d(\varPhi-\varPhi_m)g_2 = g_2 + d\varPhi_m g_2.&
\end{eqnarray*}

Then, if $\omega\left(\left[g_1\right]\right) = \omega\left(\left[g_2\right]\right)$ then $g_1 + d\varPhi_m g_1 = g_2 + d\varPhi_m g_2$ , and therefore
$g_1$ and $g_2$
belong to the same class, which contradicts the condition.

Finally, it remains to prove that the mapping is surjectivity. If a form $f$ is an element of the image of an operator $d(\varPhi-\varPhi_m)$,
then there exists a form $g \in Z^{s,\lambda}_{\delta+1,\varLambda^{q+1}}$ such that $f=d(\varPhi-\varPhi_m)g$ and hence there is also a class $\left[g\right]$ containing this form. 
Thus, the mapping $\omega$ is the isomorphism we were looking for.
\end{proof}

\begin{remark} Note that the operator $\left(\varPhi - \varPhi_{m}\right)$ has the form
\begin{equation}
\label{dlaremar}
\left(\varPhi - \varPhi_{m}\right) f (x)=\int\limits_{{\mathbb R}^n} f (y) \wedge  
\sum_{\left|I\right|=q} \sum_{k=1}^{m+1} \sum_{j=1}^{J (k)}
\frac{h_k^{(j)} (x) (d^{n-q-1})^*_y (h_k^{(j)} (y) (\star dy_I))  dx_I}{(n+2k-2){
\vartheta}^{n+2k-2}(x)}.
\end{equation}
Thus, all elements of the image of the operator 
$\left(\varPhi - \varPhi_{m}\right)$ 
are represented as finite linear combinations of basis vectors of the space 
${\mathcal H}_{\leqslant m, \Lambda^q}$ of $q$-forms with coefficients in 
the space ${\mathcal H}_{\leqslant m}$, divided by functions
$\vartheta^{n+2k-2}(x)$.
Applying operator $d$ to the right-hand side of (\ref{dlaremar}), we see that the dimension of the image of operator $d(\varPhi - \varPhi_{m})$ does not exceed the dimension of the image of $\left(\varPhi - \varPhi_{m}\right)$. 
\end{remark}

\section{Anisotropic case}

Similarly to 
the usual anisotropic H\"older spaces (see, for example, \cite{Kry96} or \cite{LadSoUr67}), 
anisotropic weighted H\"older spaces on a cylindrical domain ${\mathcal X}\times (0,T)$ were 
introduced in paper \cite{ShlTa18} (see also \cite{Kond66} as well as 
\cite{Be11} for the case when the base 
$\mathcal X$ is a Riemannian manifold with a conical singularity or \cite{MazRoss04} for polyhedral domains). 
The specificity of these spaces is that, although the 
dilation principle with respect to 
the element smoothness index is fulfilled, it is violated with respect to the weight index. 
This is done intentionally in order to guarantee, under certain conditions, the continuity of 
both parabolic and elliptic potentials on the scale of these spaces, see 
\cite[\S 3,b \S 4]{ShlTa18}.  

Consider a cylinder $\mathbb{R}^n \times [0,T]$, where $T>0$. On this set, we construct a space $C^{0,0,0,0}_{\delta,T}$ of continuous functions with a weight only in the variables $(x_1,…,x_n)$ and with a finite norm
\begin{equation*}
\|u \|_{C^{0,0,0,0} _{\delta,T}} =  \sup_{t \in [0,T]} 
\|u (\cdot, t)\|_{C^{0,0} _\delta}.
\end{equation*}

For $0<\lambda \leqslant 1$ and $0<\mu \leqslant 1$  we define weighted H\"older spaces  $C^{0,0,\lambda,\mu} _{\delta,T}$, for which the norm is finite
\begin{equation*}
\|u \|_{C^{0,0,\lambda,\mu} _{\delta,T}} =  \sup_{t \in [0,T]} 
\|u (\cdot, t)\|_{C^{0,0} _\delta } + \langle u\rangle 
_{\lambda, \mu,\delta,\mathbb{R}^n ,T},
\end{equation*}
where
\begin{equation*}
\langle u\rangle_{\lambda,\mu, \delta, \mathbb{R}^n,T} =
\left\{ 
\begin{array}{ll}
\!\!\!\!\sup\limits_{t \in [0,T]} \langle u(\cdot,t)\rangle_{\lambda,\delta, \mathbb{R}^n},
&\!\!\! \mu=0,
\\
\!\!\!\!\sup\limits_{t \in [0,T]} \langle u(\cdot,t)\rangle_{\lambda,\delta, \mathbb{R}^n}
+ \!\!\!\! \sup\limits_{t\ne \tau \atop 
t,\tau \in [0,T]}\frac{  \|u (\cdot,t) - u (\cdot,\tau)\|_{C^{0,0}_\delta }}
{|t-\tau|^\mu}, & \!\!\!\mu\in (0,1].
\end{array}
\right.
\end{equation*}

Further, let $s \in \mathbb{Z}_+$. We denote by $C^{2s,s,\lambda, \mu}_{\delta,T}$ the spaces of $s$ times continuously differentiable functions over ${\mathbb R}^n \times [0,T]$, having a finite norm
\begin{equation*}
\|u \|_{C^{2s,s,\lambda,\mu} _{\delta,T} } = \sum_{
 |\alpha| +2j \leq 2s}  \| \partial^j_t \partial^{\alpha}_x u 
\|_{C^{0,0,\lambda,\mu}_{\delta + |\alpha|,T} }. 
\end{equation*}

Finally, for $k \in \mathbb{Z}$ we define spaces 
$C^{2s+k,s,\lambda,\mu} _{\delta,T}$ with additional smoothness in the variables $(x_1,…,x_n)$ with a finite norm
\begin{equation*}
\|u \|_{C^{2s+k,s,\lambda,\mu} _{\delta,T}} =
\sum_{|\beta|\leq k } 
 \|\partial^{\beta}_x u \|_{C ^{2s,s,\lambda,\mu}_
{\delta+|\beta|,T}}.
\end{equation*}

\subsection{Case $\mu=0$}

Now let $\varLambda ^q (t)$ denote the induced bundle of exterior differential forms of 
degree $q$ over the half-space ${\mathbb R}^{n+1}_{t\geq 0} ={\mathbb R}^{n} \times 
[0,+\infty)$ with coordinates $(x,t)$,  that is, its sections are 
exterior differential forms on ${\mathbb R}^{n} $, whose coefficients depend 
on the parameter $t \in [0,+\infty)$:
\begin{equation*}
U= \sum_{|I|=q} U_I (x,t) dx_I, \, I= (i_1, \dots i_q), \, 1\leqslant i_j \leqslant n,  \, \, 
0\leqslant q \leqslant n ,
\end{equation*}
where, as usual  $dx_I = dx_{i_1} \wedge \dots \wedge dx_{i_q}$, and the symbol
$\wedge$ denotes the exterior product of differential forms.

Consider the following complex of Banach spaces:
\begin{multline}
\label{complex2}
0 \rightarrow C^{2s+k,s,\lambda,0}_{\delta, \varLambda^{0}} \xrightarrow{d_0}
C^{2s+k-1,s,\lambda,0}_{\delta+1, \varLambda^{1}} \xrightarrow{d_1}\ldots \\
\ldots 
\xrightarrow{d_{k-2}} 
C^{2s+1,s,\lambda,0}_{\delta+(k-1), \varLambda^{k-1}} \xrightarrow{d_{k-1}}
C^{2s,s,\lambda,0}_{\delta+k, \varLambda^{k}}  \cap \mathcal{S}_{d^{k}}  \xrightarrow{d_{k}} 0,
\end{multline}
 where
\begin{equation*}
(d^q U) (x,t) = \sum_{j=1}^n \sum_{|I|=q} \frac{\partial 
U_I (x,t)}{\partial x_j } dx_j \wedge dx_I  .
\end{equation*}

In the article \cite{SidShl20} Theorem \ref{t.weight.Hoelder.d} was extended to bounded operators  
\begin{equation} 
\label{eq.d.easy}
\left(d^q, (d^{q-1})^*\right): 
C^{2s+k+1,s,\lambda,0}_{\delta,T,\varLambda^{q}} 
\to C^{2s+k,s,\lambda,0}_{\delta+1,T,\varLambda^{q+1} }  \cap \mathcal{S}_{d}
 \oplus C^{2s+k,\lambda,0}_{\delta+1,T,\varLambda^{q-1}}  \cap 
\mathcal{S}_{d^*}, \quad k \in {\mathbb Z}_+, 
\end{equation}
\begin{equation} 
\label{eq.d.easy.closure}
\left(d^q, (d^{q-1})^*\right): 
\mathfrak{C}^{2s+k+1,s,\lambda,0}_{\delta,T,\varLambda^{q}} 
\to \mathfrak{C}^{2s+k,s,\lambda,0}_{\delta+1,T,\varLambda^{q+1} }  \cap \mathcal{S}_{d}
 \oplus \mathfrak{C}^{2s+k,\lambda,0}_{\delta+1,T,\varLambda^{q-1}}  \cap 
\mathcal{S}_{d^*}, \quad k \in {\mathbb Z}_+,
\end{equation}
where $\mathfrak{C}^{2s+k,s,\lambda,0}_{\delta,T}$ denotes the closure of the set 
$C^\infty ([0,T],{\mathcal D} ({\mathbb R}^n))$
in the space ${C}^{2s+k,s,\lambda,0}_{\delta,T}$, and
$C^\infty ([0,T],{\mathcal D} ({\mathbb R}^n))$, in turn, is the space of infinitely differentiable mappings  $U:[0,T] \to {\mathcal D} ({\mathbb R}^n)$.
Thus, we have

\begin{corollary} 
\label{c.weight.Hoelder.d.easy}
Let
   $n \geq 2$,
   $s\in {\mathbb Z}_+$,  $0 < \lambda < 1$.
If   $\delta > 0$ and 
   $\delta+1-n \not\in \mathbb{Z}_+$, then the operators  \eqref{eq.d.easy} and \eqref{eq.d.easy.closure} are normal
solvable. Moreover,
\begin{itemize}
\item
\eqref{eq.d.easy} and  \eqref{eq.d.easy.closure} are isomorphisms if $0 < \delta < n-1$;

\item
\eqref{eq.d.easy} and  \eqref{eq.d.easy.closure}  are  injections with a closed image if
$n-1+m < \delta < n+m$ for $m \in \mathbb{Z}_+$; 
more precisely, the image of the operator \eqref{eq.d.easy}
consists of all pairs
   $f \in  C^{2s+k,s,\lambda,0}_{\delta+1,T,\varLambda^{q+1}} \cap 
\mathcal{S}_{d}$, 
	$g \in  C^{2s+k,s,\lambda,0}_{\delta+1,T,\varLambda^{q-1}} \cap 
\mathcal{S}_{d^*}$, 
satisfying

\begin{equation}
\label{eq.image.T}
(f (\cdot , t),d h)_{L^{2}_{\varLambda^{q+1}} ({\mathbb R}^n)} +
(g (\cdot, t) ,d^* h)_{L^{2} _{\varLambda^{q-1}}({\mathbb R}^n)}= 0 
\end{equation}
for all $t \in [0,T] \mbox{ and } h \in H^q_{\leqslant m+1}$,  
and the image of the operator \eqref{eq.d.easy.closure}
consists of all pairs
   $f \in  \mathfrak{C}^{2s+k,s,\lambda,0}_{\delta+1, T,\varLambda^{q+1}} \cap 
\mathcal{S}_{d}$, 
	$g \in  \mathfrak{C}^{2s+k,s,\lambda,0}_{\delta+1, T,\varLambda^{q-1}} \cap 
\mathcal{S}_{d^*}$, satisfying \eqref{eq.image.T}.
\end{itemize}
\end{corollary}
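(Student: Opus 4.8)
The plan is to treat the time variable $t \in [0,T]$ as a mere parameter and to reduce the assertion to the already established isotropic Theorem \ref{t.weight.Hoelder.d} applied fiberwise, i.e. slice by slice in $t$. The crucial structural remark is that both $d^q$ and $(d^{q-1})^*$ differentiate only in the spatial variables $x$, and that the integral potentials $\varPhi_q$, $\hat\varPhi_q$ of \eqref{eq.volume.potential.d} (together with their modified versions $\varPhi_{m,q}$, $\hat\varPhi_{m,q}$) are convolution-type operators in $x$ alone; hence they commute both with the evaluation $U \mapsto U(\cdot,t)$ and with $\partial_t$. Consequently, for each fixed $t$ the slice $U(\cdot,t)$ lies in the isotropic space $C^{2s+k+1,\lambda}_{\delta,\varLambda^{q}}$ and is acted on precisely by the operator of Theorem \ref{t.weight.Hoelder.d}, whose smoothness index is to be identified with $2s+k$.

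First I would establish the boundedness of \eqref{eq.d.easy} and \eqref{eq.d.easy.closure}. Since the potentials carry no dependence on $t$, the isotropic continuity estimates of Lemma \ref{l.phi.Hoelder.d} and of \cite{SidShl19} hold with constants independent of $t$. As for $\mu=0$ the anisotropic norm is simply the supremum over $t \in [0,T]$ of the corresponding spatial norms, taking the supremum of the fiberwise estimates yields at once the boundedness of $(d^q,(d^{q-1})^*)$ and of the potentials on the scale $C^{2s+k,s,\lambda,0}_{\delta,T,\varLambda^{q}}$, as well as on the closures $\mathfrak{C}^{2s+k,s,\lambda,0}_{\delta,T,\varLambda^{q}}$. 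Differentiation under the integral sign shows that the potentials preserve the $C^s$-dependence on $t$, so the target anisotropic spaces are respected.

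Next I would describe the image and prove normal solvability. For each $t$, Theorem \ref{t.weight.Hoelder.d} identifies, in the range $n-1+m<\delta<n+m$, the image of the fiber operator as the pairs $(f(\cdot,t),g(\cdot,t))$ satisfying the orthogonality relations against $H^q_{\leqslant m+1}$; imposing these for every $t\in[0,T]$ is exactly condition \eqref{eq.image.T}. In the subcritical range $0<\delta<n-1$ the fiber operator is an isomorphism with a $t$-independent bound on its inverse, and applying $\varPhi_q$, $\hat\varPhi_q$ slice by slice produces a bounded two-sided inverse in the anisotropic norm, giving the isomorphism assertion. For $n-1+m<\delta<n+m$ the fiber operator is injective, hence so is the anisotropic operator, since a form vanishes if and only if all its slices vanish.

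The step I expect to be the main obstacle is the closedness of the image, that is, normal solvability, since Fredholmness is genuinely lost: the kernel and cokernel of the fiber operator, although finite-dimensional for each fixed $t$, assemble over the continuum $[0,T]$ into infinite-dimensional spaces. To obtain a closed image I would invoke the uniform-in-$t$ a priori estimate underlying the isotropic Fredholm property: on the orthogonal complement of the finite-dimensional obstruction space one has $\|U(\cdot,t)\|_{C^{2s+k+1,\lambda}_{\delta}} \leqslant C\big(\|d^q U(\cdot,t)\| + \|(d^{q-1})^* U(\cdot,t)\|\big)$ with $C$ independent of $t$. Taking the supremum over $t$ promotes this to the anisotropic estimate, whence closedness of the image follows by a standard argument, and the characterization \eqref{eq.image.T} shows the image is exactly the closed subspace cut out by those orthogonality conditions. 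The same reasoning applies verbatim to \eqref{eq.d.easy.closure} on the closures $\mathfrak{C}$, since the potentials map $C^\infty([0,T],\mathcal{D}(\mathbb{R}^n))$ into itself and extend by continuity.
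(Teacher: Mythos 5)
Your proposal is correct and follows essentially the same route as the paper, which gives no argument of its own here but simply cites \cite{SidShl20}, where the extension is obtained exactly as you describe: $t$ is treated as a parameter, the isotropic Theorem \ref{t.weight.Hoelder.d} and the potentials are applied slice by slice, and the fiberwise estimates are assembled by taking suprema over $t$, which is precisely what the $\mu=0$ norms are. One small imprecision: in the range $n-1+m<\delta<n+m$ the fiber operator is injective, so the lower bound $\|U(\cdot,t)\|\leqslant C\big(\|d^qU(\cdot,t)\|+\|(d^{q-1})^*U(\cdot,t)\|\big)$ holds on the whole domain --- the finite-dimensional obstruction space sits in the target, not the source --- but this only simplifies your closedness argument.
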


As in the case of isotropic spaces, we define cocycles, coboundaries, and cohomology as follows. Let $Z^{2s+k,s,\lambda,0}_{\delta, \varLambda^{q}}$ denotes the space of cocycles and contain forms belonging to the set
$C^{2s+k,s,\lambda,0}_{\delta, \varLambda^{q}} \cap \mathcal{S}_{d^{q}}$, and
$B^{2s+k,s,\lambda,0}_{\delta, \varLambda^{q}}$ denotes the space of coboundaries and contains elements
$f \in C^{2s+k,s,\lambda,0}_{\delta, \varLambda^{q}}$, for which there exists a form 
$u \in C^{2s+k+1,s,\lambda,0}_{\delta-1, \varLambda^{q-1}}$ such that the equation
$f=du$ is valid. Finally, let $H^{2s+k,s,\lambda,0}_{\delta, \varLambda^{q}}$ denotes the cohomology space of complex (\ref{complex2}) and contains forms lying in
$Z^{2s+k,s,\lambda,0}_{\delta, \varLambda^{q}} / B^{2s+k,s,\lambda,0}_{\delta, \varLambda^{q}}$.

We now get the following statement.
\begin{corollary}
\label{c.weight.Hoelder.d.easy.cor}
Let $n \geqslant 2$, $s,k\in {\mathbb Z}_+$,  $0 < \lambda < 1$, 
 $\delta > n/2$ and $f \in Z^{2s+k,s,\lambda,0}_{\delta+1, \varLambda^{q+1}}$. If the form
$u \in C^{2s+k+1,s,\lambda,0}_{\delta, \varLambda^{q}}$
is a solution to the equation $du=f$,
where 
then there exists a form $v \in C^{2s+k+1,s,\lambda,0}_{\delta, \varLambda^{q}}$,
satisfying the system of operator equations
\begin{equation*}
\begin{cases}
dv=f, \\
d^*v=0.
\end{cases}
\end{equation*}
\end{corollary}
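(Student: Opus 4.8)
The plan is to follow the proof of Corollary~\ref{t.weight.Hoelder.d.cor} essentially verbatim, transferring each step to the anisotropic scale in which $t \in [0,T]$ enters only as a parameter. The key ingredient is the anisotropic counterpart of the decomposition \eqref{uch},
\[
u = d^* \hat\varPhi u + d \varPhi u,
\]
valid for $u \in C^{2s+k+1,s,\lambda,0}_{\delta, \varLambda^{q}}$ under the hypothesis $\delta > n/2$. Since the differential $d^q$ and its formal adjoint act solely in the spatial variables $x$, while the potentials $\varPhi$ and $\hat\varPhi$ are defined by integration over $\mathbb{R}^n$ with $t$ held fixed, this identity follows from the isotropic identity \eqref{uch} applied on each time slice, $t \in [0,T]$ being treated as a fixed parameter.

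Granting the decomposition, I would set $v = d^* \hat\varPhi u$. Applying $d$ and using $d \circ d = 0$ gives
\[
f = du = d\left( d^* \hat\varPhi u \right) = dv,
\]
so the first equation of the system holds, while the second, $d^* v = 0$, is immediate from $d^* \circ d^* = 0$. The membership $v \in C^{2s+k+1,s,\lambda,0}_{\delta, \varLambda^{q}}$ then follows from the continuity of the composite $d^* \hat\varPhi$ on this scale, which is contained in the mapping properties of the anisotropic potentials established in \cite{SidShl20}.

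The main obstacle---though I expect it to be routine---will be checking that both the decomposition and the boundedness of $d^* \hat\varPhi$ persist under the anisotropic norms with $\mu = 0$. This reduces to the observation that $d$, $d^*$, $\varPhi$ and $\hat\varPhi$ all act in $x$ alone and commute with $\partial_t^{\,j}$; consequently the norm $\|\cdot\|_{C^{2s+k+1,s,\lambda,0}_{\delta,T}}$ is the supremum over $t \in [0,T]$ of the corresponding isotropic norms of the sections $u(\cdot,t)$ and their $t$-derivatives, to which the isotropic estimates of Lemma~\ref{l.phi.Hoelder.d} and \cite{SidShl19} apply uniformly in $t$ once $\delta > n/2$.
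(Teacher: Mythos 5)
Your proposal is correct and follows essentially the same route as the paper: the paper's proof simply repeats the argument of Corollary~\ref{t.weight.Hoelder.d.cor}, invoking the anisotropic decomposition $u = d^*\hat\varPhi u + d\varPhi u$ and the continuity of $d^*\hat\varPhi$ on the scale $C^{2s+k+1,s,\lambda,0}_{\delta,\varLambda^q}$ from Corollary~3.4 of \cite{SidShl20}, which is exactly the slice-wise, $t$-as-parameter justification you give. The only cosmetic difference is that you re-derive the anisotropic ingredients from the isotropic ones rather than citing them directly.
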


\begin{proof}
Carrying out the proof as in Corollary \ref{t.weight.Hoelder.d.cor} and relying on Corollary 3.4 of \cite{SidShl20}, we obtain the assertion of the corollary.
\end{proof}
Finally, we proceed to describe the cohomology spaces 
of complex (\ref{complex2}).
\begin{theorem}
\label{kohom2}
Let $n \geqslant 2$, $s\in {\mathbb Z}_+$,  $0 < \lambda < 1$ and $n+m-1 < \delta < n+m$. Then the cohomology groups of complex \eqref{complex2} are isomorphic to the image of the operator $d\left(\varPhi-\varPhi_m\right)$, acting from $Z^{2s+k,s,\lambda,0}_{\delta+1, \varLambda^{q+1}}$
to $Z^{2s+k,s,\lambda,0}_{\delta+1, \varLambda^{q+1}}$.
\end{theorem}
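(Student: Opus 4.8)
The plan is to reproduce the scheme of the proof of Theorem~\ref{kohom1} almost verbatim, substituting for each isotropic ingredient its anisotropic counterpart from \cite{SidShl20}: Theorem~\ref{t.weight.Hoelder.d} is replaced by Corollary~\ref{c.weight.Hoelder.d.easy}, Corollary~\ref{t.weight.Hoelder.d.cor} by Corollary~\ref{c.weight.Hoelder.d.easy.cor}, and the scale $C^{s,\lambda}_\delta$ by $C^{2s+k,s,\lambda,0}_{\delta,T}$. Since for $\mu=0$ the variable $t$ enters only as a parameter controlled in the supremum norm, the mapping and boundedness properties of the potentials $\varPhi$ and $\varPhi_m$ hold fiberwise in $t$, uniformly on $[0,T]$.

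First I would check that $d(\varPhi-\varPhi_m)$ is a continuous operator on $Z^{2s+k,s,\lambda,0}_{\delta+1,\varLambda^{q+1}}$. For $n+m-1<\delta<n+m$ the potential $\varPhi$ alone is not bounded on the whole scale, but on cocycles one has $d\varPhi f=f$, so that $d(\varPhi-\varPhi_m)f=f-d\varPhi_m f$; the required estimate then follows from the continuity of $d$ and of $\varPhi_m\colon C^{2s+k,s,\lambda,0}_{\delta+1,\varLambda^{q+1}}\to C^{2s+k+1,s,\lambda,0}_{\delta,\varLambda^{q}}$ established in \cite{SidShl20}, exactly as in the corresponding passage of Theorem~\ref{kohom1}.

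The heart of the argument is the identity $C^{2s+k,s,\lambda,0}_{\delta+1,\varLambda^{q+1}}\cap\mathcal{S}_{d(\varPhi-\varPhi_m)}=B^{2s+k,s,\lambda,0}_{\delta+1,\varLambda^{q+1}}$. If $d(\varPhi-\varPhi_m)f=0$, then $f=d\varPhi_m f$ is a coboundary, since $\varPhi_m$ maps into the appropriate space. Conversely, if $f=du$ is a coboundary, Corollary~\ref{c.weight.Hoelder.d.easy.cor} supplies a primitive $v$ with $d^*v=0$, and Corollary~\ref{c.weight.Hoelder.d.easy} then shows that $f$ satisfies the orthogonality relations \eqref{eq.image.T} against every $h\in H^q_{\leqslant m+1}$ for all $t\in[0,T]$; by the explicit expression \eqref{dlaremar} this is precisely the condition $d(\varPhi-\varPhi_m)f=0$.

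Finally I would define $\omega([g])=d(\varPhi-\varPhi_m)g$ on $H^{2s+k,s,\lambda,0}_{\delta+1,\varLambda^{q+1}}$. The previous step makes $\omega$ well defined (coboundaries lie in the kernel) and injective (the equality $\omega([g_1])=\omega([g_2])$ forces $g_1-g_2$ into the image of $d\varPhi_m$, hence into $B^{2s+k,s,\lambda,0}_{\delta+1,\varLambda^{q+1}}$), while surjectivity onto the image is immediate from the definition. The point at which this case genuinely departs from the isotropic one---and the only real subtlety---is that no finite-dimensionality holds: in \eqref{dlaremar} the coefficients now carry an arbitrary continuous dependence on $t\in[0,T]$, so the image of $d(\varPhi-\varPhi_m)$ is described by the general form of its elements rather than by a finite basis. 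I expect the main effort to lie not in this formal scheme but in verifying that the continuity and orthogonality statements of \cite{SidShl20} indeed transfer uniformly in $t$ to the present normally solvable setting.
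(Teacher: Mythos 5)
Your proposal is correct and follows essentially the same route as the paper: the paper's own proof of Theorem~\ref{kohom2} is literally the one-line instruction to repeat the argument of Theorem~\ref{kohom1} with Corollary~\ref{c.weight.Hoelder.d.easy} in place of Theorem~\ref{t.weight.Hoelder.d}, which is exactly the substitution scheme you carry out (and your closing observation that finite-dimensionality is lost because the coefficients acquire a dependence on $t$ matches the paper's remark following the theorem).
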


\begin{proof}
The proof is carried out similarly to the proof of Theorem \ref{kohom1}, relying on Corollary \ref{c.weight.Hoelder.d.easy} instead of Theorem \ref{t.weight.Hoelder.d}.
\end{proof}

Note that the operator $\left(\varPhi - \varPhi_{m}\right)$ has the form
\small
\begin{equation*}
\left(\varPhi - \varPhi_{m}\right) f (x,t) = 
\int\limits_{{\mathbb R}^n} f (y,t) \wedge  
\sum_{\left|I\right|=q}\sum_{k=1}^{m+1} \sum_{j=1}^{J (k)}
\frac{h_k^{(j)} \left(x\right) \left(d^{n-q-1}\right)^*_y \left(h_k^{(j)} (y) (\star dy_I)\right)  dx_I}{(n+2k-2){
\vartheta}^{n+2k-2}(x)}.
\end{equation*}
\normalsize

Thus, the elements of the image of the operator $d\left( \varPhi-\varPhi_m \right)$
acting from $Z^{2s+k,s,\lambda,0}_{\delta+1, \varLambda^{q+1}}$ 
to $Z^{2s+k,s,\lambda,0}_{\delta+1, \varLambda^{q+1}}$ are represented as follows:
\begin{equation*}
d\left(
\sum_{\left|I\right|=q}\sum_{k=1}^{m+1} \sum_{j=1}^{J (k)} 
\frac{a_I (t) h_k^{(j)} \left(x\right) dx_I}
{(n+2k-2){\vartheta}^{n+2k-2}(x)} 
\right),
\end{equation*}
where $a_I(t)$ are functions of a variable $t$ of class $C^{s,0}([0,T])$.

\subsection{Case $\mu=\lambda/2$}

A direct generalization of Corollary \ref{c.weight.Hoelder.d.easy} to the scale of spaces
$C^{2s+k,s,\lambda,\frac{\lambda}{2}} _{\delta,T,\varLambda^q}$ is impossible, since the operators $\varPhi$,
$\hat \varPhi$ do not act continuously on it. Therefore, we need to 
change 
slightly the definitions of the spaces.
To this end, we denote by 
$\Gamma^{2s+k,s,\lambda,\frac{\lambda}{2}}_{\delta,T,\varLambda^q}$ the completion of the space 
$C^{2s+k,s,\lambda,\frac{\lambda}{2}}_{\delta,T,\varLambda^q}$ 
with respect to the norm
\begin{equation*}
\left\|u\right\|_{\Gamma^{2s+k,s,\lambda,\frac{\lambda}{2}}_{\delta,T,\varLambda^q}} = 
\left\|u\right\|_{C^{2s+k,s,\lambda,\frac{\lambda}{2}}_{\delta,T,\varLambda^q}}+
\left\|du\right\|_{C^{2s+k,s,\lambda,\frac{\lambda}{2}}_{\delta+1,T,\varLambda^{q+1}}}+
\left\|d^* u\right\|_{C^{2s+k,s,\lambda,\frac{\lambda}{2}}_{\delta+1,T,\varLambda^{q-1}}}.
\end{equation*}

Now we consider an operator of the following form
\begin{equation} 
\label{gtog}
(d, d^*): 
\Gamma^{2s+k,s,\lambda,\frac{\lambda}{2}}_{\delta,T,\varLambda^q} \rightarrow
\Gamma^{2s+k,s,\lambda,\frac{\lambda}{2}}_{\delta+1,T,\varLambda^{q+1}}  
\cap \mathcal{S}_{d} \oplus
\Gamma^{2s+k,s,\lambda,\frac{\lambda}{2}}_{\delta+1,T, \varLambda^{q-1}}  
\cap \mathcal{S}_{d^*}
\end{equation}
and formulate for it an analogue of Theorem \ref{t.weight.Hoelder.d} and Corollary \ref{c.weight.Hoelder.d.easy}.

\begin{theorem}
\label{c.weight.Hoelder.d.not.easy}
Let  
   $n \geq 2$,
   $s\in {\mathbb Z}_+$,  $0 < \lambda < 1$.
If    $\delta > 0$ and 
   $\delta+1-n \not\in \mathbb{Z}_+$, then the operator (\ref{gtog}) is normal
solvable. Moreover,
\begin{itemize}
\item
\eqref{gtog} is an isomorphism if  $0 < \delta < n-1$;
\item
\eqref{gtog} is an injection with a closed image if
$n-1+m < \delta < n+m$ for $m \in \mathbb{Z}_+$; 
more precisely, the image of the operator  \eqref{gtog} is the set of forms $(f,g) \in \Gamma^{2s+k,s,\lambda, \frac{\lambda}{2}}_{\delta+1,T,\varLambda^{q+1}\oplus \varLambda^{q-1}}$,
for which conditions
\begin{eqnarray}
\label{ortH}
&df(x,t)=0,&\\ \nonumber
&d^*g(x,t)=0,&
\end{eqnarray}
\begin{equation}
\label{ortH2}
\big(f(\cdot,t), dh\big)_{L_2}+\big(g(\cdot,t), d^*h\big)_{L_2}=0,
\end{equation}
are satisfied, 
and where \eqref{ortH2} is true for any $h$ from ${\mathcal H}_{\leqslant m+1}$.
\end{itemize}
\end{theorem}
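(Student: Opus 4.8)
The plan is to transfer Corollary~\ref{c.weight.Hoelder.d.easy} from the scale $\mu=0$ to the scale $\mu=\frac{\lambda}{2}$ by a density argument, the only genuinely new ingredient being the behaviour of the potentials. As the opening remark of this subsection explains, the obstruction is that in the formulation of Theorem~\ref{t.weight.Hoelder.d} and Corollary~\ref{c.weight.Hoelder.d.easy} the inverse is built from $\varPhi_q,\hat\varPhi_q$, which gain a full spatial derivative; on the $\mu=\frac{\lambda}{2}$-scale this gain would force one to control the temporal $(\lambda/2)$-modulus of the \emph{top-order} spatial derivative of $\varPhi_q f$, and this modulus is not bounded by the $C^{2s+k,s,\lambda,\frac{\lambda}{2}}$-norm of $f$, since the latter controls the temporal modulus of the top-order derivative of $f$ only in the sup-norm and not in the spatial H\"older norm. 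The graph-norm completion $\Gamma$ is designed precisely to bypass this: in \eqref{gtog} the domain and the target carry the \emph{same} smoothness index $2s+k$, so the full-derivative gain is never invoked; one needs only the benign same-order mapping property of the potentials together with the identities of Lemma~\ref{l.phi.Hoelder.d}.

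Accordingly, the first and decisive step is to show that, for $0<\delta<n-1$, the potentials induce bounded operators $\varPhi_q:\Gamma_{\delta+1,\varLambda^{q+1}}\cap\mathcal S_{d}\to\Gamma_{\delta,\varLambda^{q}}$ and $\hat\varPhi_q:\Gamma_{\delta+1,\varLambda^{q-1}}\cap\mathcal S_{d^*}\to\Gamma_{\delta,\varLambda^{q}}$ (suppressing throughout the fixed indices $2s+k,s,\lambda,\frac{\lambda}{2},T$, and writing $C_\delta$, $\Gamma_\delta$ for the corresponding norms). By Lemma~\ref{l.phi.Hoelder.d} one has $d\varPhi_q f=f$ and $d^*\varPhi_q f=0$, so that
\[
\|\varPhi_q f\|_{\Gamma_{\delta}}=\|\varPhi_q f\|_{C_{\delta}}+\|f\|_{C_{\delta+1}},
\]
and it remains only to bound $\|\varPhi_q f\|_{C_{\delta}}$ by $\|f\|_{C_{\delta+1}}$. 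Here one uses exclusively the same-order estimate $\varPhi_q:C_{\delta+1}\to C_{\delta}$: the spatial H\"older regularity of $\varPhi_q f$ is produced already from the \emph{sup}-norm of the data (a gain of H\"older exponent strictly below one, which the order-one potential certainly provides), whence the temporal $(\lambda/2)$-modulus of $\varPhi_q f$ is controlled by the sup-norm temporal modulus of $f$ — exactly the quantity the $C^{2s+k,s,\lambda,\frac{\lambda}{2}}$-norm does bound. This is the point at which working at one fixed smoothness order removes the top-order obstruction, and I expect it to be the main technical step. Boundedness of \eqref{gtog} itself is then clear: $d,d^*$ are contractions into the respective $C$-spaces by the definition of the $\Gamma$-norm, and that $du,d^*u$ land in the target $\Gamma$-spaces follows from the decomposition $u=d\varPhi u+d^*\hat\varPhi u$ (\cite[Corollary~1]{SidShl19}) together with the mapping properties just established.

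With the potentials available on the $\Gamma$-scale, everything else follows the proof of Theorem~\ref{t.weight.Hoelder.d}, carried out slicewise in $t$. For $\delta>0$ the operator is injective: if $du=0$ and $d^*u=0$ then $\Delta u=(dd^*+d^*d)u=0$ in the sense of distributions, so for each fixed $t$ the coefficients of $u(\cdot,t)$ are harmonic on $\mathbb R^n$ and decay at infinity (as $\delta>0$), whence $u(\cdot,t)\equiv0$ by Liouville's theorem, i.e.\ $u=0$. For $0<\delta<n-1$ the assignment $(f,g)\mapsto\varPhi_q f+\hat\varPhi_q g$ is, by the identities of Lemma~\ref{l.phi.Hoelder.d} and the continuity just proved, a two-sided inverse of \eqref{gtog}, so the operator is an isomorphism; closed range in every case follows.

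For the range $n-1+m<\delta<n+m$ the plain potentials are no longer bounded even at the same order, and I would replace them by the modified potentials $\varPhi_{m,q},\hat\varPhi_{m,q}$, which subtract the first $m+1$ harmonic-polynomial terms of the expansion of $e(x-y)$ and are continuous into $\Gamma_\delta$ on this weight range by the same same-order estimate; this yields injectivity with closed image. To identify the image I would invoke Corollary~\ref{c.weight.Hoelder.d.easy}, whose image in the $\mu=0$ case is cut out by \eqref{eq.image.T}, on the dense subspace $C^\infty([0,T],\mathcal D(\mathbb R^n))$ and pass to the limit in the $\Gamma$-norm: the conditions \eqref{ortH} merely express that $(f,g)$ is a $d$-closed, $d^*$-closed pair, that is, membership in $\mathcal S_d\oplus\mathcal S_{d^*}$, while \eqref{ortH2} is the finite system of orthogonality relations against $h\in\mathcal H_{\leqslant m+1}$ holding for each $t\in[0,T]$. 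Since both are closed conditions in the $\Gamma$-topology and hold on a dense set, they persist in the limit and characterize the image exactly as stated. The one hard point throughout is the same-order continuity of the potentials on the $\mu=\frac{\lambda}{2}$-scale asserted in the second step, which is precisely what the passage from $C$ to the graph completion $\Gamma$ is engineered to make true.
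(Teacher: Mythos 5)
Your treatment of the first bullet point is essentially the paper's: you reduce the $\Gamma$-boundedness of $\varPhi,\hat\varPhi$ to the identities $d\varPhi f=f$, $d^*\varPhi f=0$ of Lemma~\ref{l.phi.Hoelder.d} (which make the derivative terms of the graph norm cost nothing) plus a same-order estimate of $\|\varPhi f\|_{C^{2s+k,s,\lambda,\lambda/2}_{\delta}}$ by $\|f\|_{C^{2s+k,s,\lambda,\lambda/2}_{\delta+1}}$; this is exactly the computation \eqref{ocendlfi}, and the isomorphism for $0<\delta<n-1$ then follows from the two-sided inverse $(f,g)\mapsto\varPhi f+\hat\varPhi g$ as in the paper.

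The gap is in your identification of the image for $n-1+m<\delta<n+m$. You propose to verify the characterization on the ``dense subspace'' $C^\infty([0,T],\mathcal D(\mathbb R^n))$ and pass to the limit. But $\Gamma^{2s+k,s,\lambda,\lambda/2}_{\delta,T,\varLambda^q}$ is defined as the completion of the weighted H\"older space $C^{2s+k,s,\lambda,\lambda/2}_{\delta,T,\varLambda^q}$ in the graph norm, and these H\"older spaces are non-separable; smooth compactly supported forms are dense only in the strictly smaller space $\mathfrak C$ (this is precisely why the paper carries the pair of scales $C$ and $\mathfrak C$ and states Theorem~\ref{t.weight.Hoelder.d} twice). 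So the density on which your limiting argument rests is false. Moreover, even granting density, your argument only yields one inclusion: that elements of the image satisfy \eqref{ortH}--\eqref{ortH2}. The converse --- that every pair $(f,g)$ satisfying these conditions is attained --- requires producing a preimage, and the modified potentials $\varPhi_{m,q},\hat\varPhi_{m,q}$ cannot serve here because $d\varPhi_{m,q}f\neq f$ (the discrepancy $d(\varPhi-\varPhi_m)f$ is exactly what generates the cohomology in Theorem~\ref{kohom1}). The paper closes this by two separate lemmas: Lemma~\ref{prozamk} shows the condition set is closed by a direct sequential argument, and Lemma~\ref{proobr} shows that for $(f,g)$ satisfying \eqref{ortH}--\eqref{ortH2} the \emph{unmodified} potentials already give a preimage $u=\varPhi f+\hat\varPhi g$ in the $\mu=0$ scale (by Corollary~\ref{c.weight.Hoelder.d.easy}, whose orthogonality hypotheses are exactly \eqref{ortH2}), and then verifies by the same norm computation as in \eqref{ocendlfi} that this $u$ in fact lies in $\Gamma^{2s+k,s,\lambda,\lambda/2}_{\delta,T,\varLambda^q}$. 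That last verification is the step your proposal is missing.
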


\begin{proof}
It is easy to check that for $0<\delta<n-1$ the operator (\ref{gtog})
acts continuously from $\Gamma^{2s+k,s,\lambda,\frac{\lambda}{2}}_{\delta,T,\varLambda^q}$ to 
$\Gamma^{2s+k,s,\lambda,\frac{\lambda}{2}}_{\delta+1,T,\varLambda^{q+1} \oplus \varLambda^{q-1}}  \cap \mathcal{S}_{d}$. Let us prove that the operators $\varPhi$ and $\hat{\varPhi}$, acting from 
$\Gamma^{2s+k,s,\lambda,\frac{\lambda}{2}}_{\delta+1,T,\varLambda^{q+1}} \cap \mathcal{S}_{d}$ to 
$\Gamma^{2s+k,s,\lambda,\frac{\lambda}{2}}_{\delta,T,\varLambda^q}$ and from
$\Gamma^{2s+k,s,\lambda,\frac{\lambda}{2}}_{\delta+1,T,\varLambda^{q-1}}  \cap \mathcal{S}_{d}$ to 
$\Gamma^{2s+k,s,\lambda,\frac{\lambda}{2}}_{\delta,T,\varLambda^q}$ respectively, are also continuous. To do this, consider the norm
\begin{equation*}
\left\|\varPhi f\right\|_{\Gamma^{2s+k,s,\lambda,\frac{\lambda}{2}}_{\delta,T,\varLambda^q} }= 
\left\|\varPhi f\right\|_{C^{2s+k,s,\lambda,\frac{\lambda}{2}}_{\delta,T,\varLambda^q} }+
\left\|d \varPhi f\right\|_{C^{2s+k,s,\lambda,\frac{\lambda}{2}}_{\delta+1,T,\varLambda^{q+1}} }+
\left\|d^* \varPhi f\right\|_{C^{2s+k,s,\lambda,\frac{\lambda}{2}}_{\delta+1,T,\varLambda^{q-1}} }
\end{equation*}

Since $d \varPhi f = f$, the second term is $\left\| f\right\|_{C^{2s+k,s,\lambda,\frac{\lambda}{2}}_{\delta+1,T,\varLambda^{q+1}} }$.
And since $df=0$, then $d^* \varPhi f = 0$ by Lemma \ref{l.phi.Hoelder.d}.
Let us estimate the first term using embedding theorems, as well as the continuity of the operator $\varPhi$ from
$C^{2s+k-1,s,\lambda,0}_{\delta+1,T,\varLambda^{q+1}}$ to
$C^{2s+k,s,\lambda,0}_{\delta,T,\varLambda^q}$ and from 
$C^{2s+k-1,\lambda}_{\delta+1,\varLambda^{q+1}}$ to
$C^{2s+k,\lambda}_{\delta,\varLambda^q}$. Indeed,  
\begin{multline}
\label{ocendlfi}
\left\|\varPhi f\right\|_{C^{2s+k,s,\lambda,\frac{\lambda}{2}}_{\delta,T,\varLambda^q} } \leqslant 
\left\| f\right\|_{C^{2s+k,s,\lambda,0}_{\delta+1,T,\varLambda^{q+1}} }+ \\
+\sum\limits_{j=1}^{s}
\sup\limits_{t,\tau \in [0,T]}
\frac{\left\| \partial^j \left( f(\cdot,t) -  f(\cdot,\tau) \right) \right\|_{C^{2s+k-1,\lambda}_{\delta+1,\varLambda^{q+1}}}}
{\left| t-\tau \right|^{\frac{\lambda}{2}}}    
\leqslant
\left\| f\right\|_{C^{2s+k,s,\lambda,0}_{\delta+1,T,\varLambda^{q+1}} }+ \\
+\sum\limits_{j=1}^{s}
\sup\limits_{t,\tau \in [0,T]}
\frac{\left\| \partial^j \left( f(\cdot,t) -  f(\cdot,\tau) \right) \right\|_
{C^{2s+k,0}_{\delta+1,\varLambda^{q+1}}}}
{\left| t-\tau \right|^{\frac{\lambda}{2}}}  = 
\left\| f\right\|_{C^{2s+k,s,\lambda,\frac{\lambda}{2}}_{\delta+1,T,\varLambda^{q+1}} }
\end{multline}

The corresponding estimate showing the boundedness of the operator $\hat{\varPhi}$, is carried out in a similar way.

Thus, the first part of the theorem immediately follows from the continuity 
of the operators $\varPhi$, $\hat{\varPhi}$ and $(d,d^*)$, as well as the completeness of the 
spaces 
$\Gamma^{2s+k,s,\lambda,\frac{\lambda}{2}}_{\delta,T,\varLambda^q}$ and 
$\Gamma^{2s+k,s,\lambda,\frac{\lambda}{2}}_{\delta+1,T,\varLambda^{q+1} \oplus \varLambda^{q-1
}}$. 

We divide the rest of the proof into two statements.
\begin{lemma} 
\label{prozamk}
Let $s \in \mathbb{Z}_{\geqslant 0}$, $k \geqslant 1$, $0 < \lambda < 1$ and 
$n-1+m <\delta < n+m$. Then the set of pairs of forms $(f,g)$,
lying in the space $\Gamma^{2s+k,s,\lambda,\frac{\lambda}{2}}_{\delta+1,T,\varLambda^{q+1} \oplus \varLambda^{q-1}}$,
for which conditions \eqref{ortH} and \eqref{ortH2},
are valid is closed.
\end{lemma}

\begin{proof}
Consider a sequence $\left\{f_j,g_j\right\}_{j=1}^{\infty}$,
converging to $(f,g)$
in the space $\Gamma^{2s+k,s,\lambda,\frac{\lambda}{2}}_{\delta+1,T,\varLambda^{q+1} \oplus \varLambda^{q-1}}$,
such that conditions (\ref{ortH}) and (\ref{ortH2}) are satisfied for all $(f_j,g_j)$.

Since $\left\{f_j,g_j\right\}_{j=1}^{\infty}$ converges to a pair $(f,g)$ in the space 
$\Gamma^{2s+k,s,\lambda,\frac{\lambda}{2}}_{\delta+1,T,\varLambda^{q+1} \oplus \varLambda^{q-1}}$, then
$\left\{df_j,d^*g_j\right\}_{j=1}^{\infty}$ converges to $(df,d^*g)$ in 
$C^{2s+k,s,\lambda,\frac{\lambda}{2}}_{\delta+2,T,\varLambda^{q+2} \oplus \varLambda^{q-2}}$. But for any
$j \in \mathbb{N}$
the element $(df_j,d^*g_j)$
is equal to $0$.
This means that sequence $\left\{df_j,d^*g_j\right\}_{j=1}^{\infty}$
converges to $0$
in the space $C^{2s+k,s,\lambda,\frac{\lambda}{2}}_{\delta+2,T,\varLambda^{q+2} \oplus \varLambda^{q-2}}$.
Since the limit is unique, we can conclude that condition 
(\ref{ortH}) is satisfied.

Let us check the fulfillment of condition (\ref{ortH2}) for the pair $(f,g)$:
\begin{multline}
\label{K-B}
\big| (f,dh)_{L_2}+(g,d^*h)_{L_2} \big|\leqslant \\
\leqslant  \big| (f-f_j,dh)_{L_2} \big| + \big| (f_j,dh)_{L_2}\big| 
+\big| (g-g_j,d^*h)_{L_2} \big| + \big| (g_j,d^*h)_{L_2}\big| = \\
=\big| (f-f_j,dh)_{L_2} \big| +\big| (g-g_j,d^*h)_{L_2} \big|.
\end{multline}

Next, using \ref{K-B}) and  the Cauchy inequality we obtain
\begin{multline}
\label{vL2}
\big| (f-f_j,dh)_{L_2} \big| +\big| (g-g_j,d^*h)_{L_2} \big| \leqslant  \\
\leqslant
\left\| f-f_j \right\|_{L_2} \left\|dh\right\|_{L_2} +
\left\| g-g_j \right\|_{L_2} \left\|d^*h\right\|_{L_2} =  \\
=C_1 \left\| f-f_j \right\|_{L_2} + C_2 \left\| g-g_j \right\|_{L_2},
\end{multline}
and since the norm of $L_2$ is weaker than the norm of 
 $C^{2s+k,s,\lambda,\frac{\lambda}{2}}_{\delta+1,T,\varLambda^{q+1}}$, 
we see that the right hand side of (\ref{vL2}) does not exceed
\begin{equation*}
C_1 \left\| f-f_j \right\|_{C^{2s+k,s,\lambda,\frac{\lambda}{2}}_{\delta+1,T,\varLambda^{q+1}}} +
C_2 \left\| g-g_j \right\|_{C^{2s+k,s,\lambda,\frac{\lambda}{2}}_{\delta+1,T,\varLambda^{q-1}}}.
\end{equation*}

Thus, we obtain inequality
\begin{equation}
\label{eqz}
\big| (f,dh)_{L_2}+(g,d^*h)_{L_2} \big|\leqslant 
C_1 \left\| f-f_j \right\|_{C^{2s+k,s,\lambda,\frac{\lambda}{2}}_{\delta+1,T,\varLambda^{q+1}}} +
C_2 \left\| g-g_j \right\|_{C^{2s+k,s,\lambda,\frac{\lambda}{2}}_{\delta+1,T,\varLambda^{q-1}}}.
\end{equation}

Letting $j$ in (\ref{eqz})
to infinity, we get
\begin{equation*}
 (f,dh)_{L_2}+(g,d^*h)_{L_2} = 0.
\end{equation*}

This means that the subspace of $\Gamma^{2s+k,s,\lambda,\frac{\lambda}{2}}_{\delta+1,T,\varLambda^{q+1} \oplus \varLambda^{q-1}}$, consisting of the forms 
satisfying conditions (\ref{ortH}) and (\ref{ortH2}),
is closed. 
\end{proof}

It remains to prove that all elements of the image of operator (\ref{gtog})
satisfy conditions (\ref{ortH}) and (\ref{ortH2}).

\begin{lemma}
\label{proobr}
Let $s \in \mathbb{Z}_{\geqslant 0}$, $k \geqslant 1$, $0 < \lambda < 1$ and
$n-1+m <\delta < n+m$. Then the image of the operator \eqref{gtog}
is the set of forms $(f,g) \in \Gamma^{2s+k,s,\lambda, \frac{\lambda}{2}}_{\delta+1,T,\varLambda^{q+1}\oplus \varLambda^{q-1}}$,
for which conditions \eqref{ortH} and \eqref{ortH2} are valid.
\end{lemma}

\begin{proof}
Let $(f,g) \in \Gamma^{2s+k,s,\lambda, \frac{\lambda}{2}}_{\delta+1,T,\varLambda^{q+1}\oplus \varLambda^{q-1}} \subset
C^{2s+k,s,\lambda,0}_{\delta,T,\varLambda^{q+1} \oplus \varLambda^{q-1}}$,
then, according to Corollary
\ref{c.weight.Hoelder.d.easy}, the differential form
\begin{equation}
\label{ufif}
u= \varPhi f + \hat{\varPhi} g,
\end{equation}
belongs to the space $C^{2s+k+1,s,\lambda,0}_{\delta,T,\varLambda^q}$, and conditions (\ref{ortH}) and (\ref{ortH2}) are fulfilled for it.

Let us show that the form $u$
is contained in the space $\Gamma^{2s+k,s,\lambda, \frac{\lambda}{2}}_{\delta,T,\varLambda^q}$.
Indeed,
\begin{equation*}
\left\|u\right\|_
{\Gamma^{2s+k,s,\lambda, \frac{\lambda}{2}}_{\delta,T,\varLambda^q}}=
\left\|u\right\|_
{C^{2s+k,s,\lambda, \frac{\lambda}{2}}_{\delta,T,\varLambda^q}}+
\left\|du\right\|_
{C^{2s+k,s,\lambda, \frac{\lambda}{2}}_{\delta+1,T,\varLambda^{q+1}}}+
\left\|d^*u\right\|_
{C^{2s+k,s,\lambda, \frac{\lambda}{2}}_{\delta+1,T,\varLambda^{q-1}}}.
\end{equation*}

Since $df=0$ and $d^*g=0$, it is known from Lemma \ref{l.phi.Hoelder.d} that $d\hat{\varPhi} g = 0$ and
$d^*\varPhi f = 0$. Thus, the second and third terms in the last formula have the form
\begin{multline*}
\big\|du \big\|_
{C^{2s+k,s,\lambda, \frac{\lambda}{2}}_{\delta+1,T,\varLambda^{q+1}}}+
\big\|d^*u \big\|_
{C^{2s+k,s,\lambda, \frac{\lambda}{2}}_{\delta+1,T,\varLambda^{q-1}}}=\\
=\big\|d(\varPhi f + \hat{\varPhi} g)\big\|_
{C^{2s+k,s,\lambda, \frac{\lambda}{2}}_{\delta+1,T,\varLambda^{q+1}}}+
\big\|d^*(\varPhi f + \hat{\varPhi} g)\big\|_
{C^{2s+k,s,\lambda, \frac{\lambda}{2}}_{\delta+1,T,\varLambda^{q-1}}}=\\
=\big\|f  \big\|_
{C^{2s+k,s,\lambda, \frac{\lambda}{2}}_{\delta+1,T,\varLambda^{q+1}}}+
\big\| g  \big\|_
{C^{2s+k,s,\lambda, \frac{\lambda}{2}}_{\delta+1,T,\varLambda^{q-1}}}.
\end{multline*}

Using the representation 
$u=\varPhi f+ \hat{\varPhi} g$, in the norm  $\left\|u\right\|_
{C^{2s+k,s,\lambda, \frac{\lambda}{2}}_{\delta,T,\varLambda^q}}$   and carrying out an estimate similar to (\ref{ocendlfi}) , we obtain that
\begin{equation*}
\left\|u\right\|_
{C^{2s+k,s,\lambda, \frac{\lambda}{2}}_{\delta,T,\varLambda^q}} \leqslant
\left\|f\right\|_
{C^{2s+k,s,\lambda, \frac{\lambda}{2}}_{\delta+1,T,\varLambda^{q+1}}} +
\left\|g\right\|_
{C^{2s+k,s,\lambda, \frac{\lambda}{2}}_{\delta+1,T,\varLambda^{q-1}}}.
\end{equation*}

This means that $u$
is contained in the space $\Gamma^{2s+k,s,\lambda, \frac{\lambda}{2}}_{\delta,T,\varLambda^q}$.
And from (\ref{ufif})
we see that $u$ belongs to
the preimage of the forms $(f,g)$,
for which conditions (\ref{ortH}) and (\ref{ortH2}) are valid.
\end{proof}

Thus, Lemmas \ref{prozamk} and \ref{proobr} provide a proof of 
the second part of the theorem. 
\end{proof}

As in the previous sections, we define cocycles, coboundaries, and cohomology on spaces 
$\Gamma^{2s+k,s,\lambda,\frac{\lambda}{2}}_{\delta, \varLambda^{q}}$.
Let the set of coboundaries $Z^{2s+k,s,\lambda,\frac{\lambda}{2}}_{\delta, \varLambda^{q}}$
consist of all forms $f \in \Gamma^{2s+k,s,\lambda,\frac{\lambda}{2}}_{\delta, \varLambda^{q}}$, for which $df=0$,
and the set of cocycles 
$B^{2s+k,s,\lambda,\frac{\lambda}{2}}_{\delta, \varLambda^{q}}$ contains 
$f \in \Gamma^{2s+k,s,\lambda,\frac{\lambda}{2}}_{\delta, \varLambda^{q}}$ such that there exists 
$u \in \Gamma^{2s+k,s,\lambda,\frac{\lambda}{2}}_{\delta-1, \varLambda^{q-1}}$ satisfying the equation $du=f$. As usual, cohomology groups are defined as
$H^{2s+k,s,\lambda,\frac{\lambda}{2}}_{\delta, \varLambda^{q}}=Z^{2s+k,s,\lambda,\frac{\lambda}{2}}_{\delta, \varLambda^{q}} / B^{2s+k,s,\lambda,\frac{\lambda}{2}}_{\delta, \varLambda^{q}}$. Now we can consider 
the following complex of Banach spaces:
\begin{multline}
\label{complex3}
0 \rightarrow \Gamma^{2s+k,s,\lambda,\frac{\lambda}{2}}_{\delta, \varLambda^{0}} \xrightarrow{d_0}
\Gamma^{2s+k,s,\lambda,\frac{\lambda}{2}}_{\delta+1, \varLambda^{1}} \xrightarrow{d_1}\ldots \\
\ldots\xrightarrow{d_{n-2}} 
\Gamma^{2s+k,s,\lambda,\frac{\lambda}{2}}_{\delta+(n-1), \varLambda^{n-1}} \xrightarrow{d_{n-1}}
\Gamma^{2s+k,s,\lambda,\frac{\lambda}{2}}_{\delta+n, \varLambda^{n}} \xrightarrow{d_{n}} 0
\end{multline}

\begin{corollary}
\label{c.weight.Hoelder.d.not.easy.cor}
Let $n \geqslant 2$, $s,k\in {\mathbb Z}_+$, $0 < \lambda < 1$ and 
$f \in Z^{2s+k,s,\lambda,\frac{\lambda}{2}}_{\delta+1, \varLambda^{q+1}}$. If the form
$u \in \Gamma^{2s+k,s,\lambda,\frac{\lambda}{2}}_{\delta, \varLambda^{q}}$
is a solution to the equation $du=f$, 
then there exists a form $v \in \Gamma^{2s+k,s,\lambda,\frac{\lambda}{2}}_{\delta, \varLambda^{q}}$,
satisfying the system of operator equations
\begin{equation*}
\begin{cases}
dv=f, \\
d^*v=0.
\end{cases}
\end{equation*}
\end{corollary}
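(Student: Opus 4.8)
The plan is to carry the argument out exactly as in Corollaries~\ref{t.weight.Hoelder.d.cor} and~\ref{c.weight.Hoelder.d.easy.cor}, the only genuinely new point being to confirm that the constructed form lies in the refined space $\Gamma^{2s+k,s,\lambda,\frac{\lambda}{2}}_{\delta,\varLambda^q}$ rather than merely in the ambient $C^{2s+k,s,\lambda,0}_{\delta,\varLambda^q}$. First I would use the continuous embedding $\Gamma^{2s+k,s,\lambda,\frac{\lambda}{2}}_{\delta,\varLambda^q} \hookrightarrow C^{2s+k,s,\lambda,0}_{\delta,\varLambda^q}$, which follows directly from the definition of the $\Gamma$-norm, to regard the given solution $u$ of $du=f$ as an element of the $\mu=0$ scale. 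Then the decomposition established in~\cite{SidShl20} and already invoked in Corollary~\ref{c.weight.Hoelder.d.easy.cor} applies:
\begin{equation*}
u = d^*\hat\varPhi u + d\varPhi u ,
\end{equation*}
and I would set $v = d^*\hat\varPhi u$.

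The algebraic part is immediate and identical to the proof of Corollary~\ref{t.weight.Hoelder.d.cor}: applying $d$ to the decomposition and using $d\circ d = 0$ gives $dv = d(d^*\hat\varPhi u) = du = f$, while $d^*\circ d^* = 0$ yields $d^*v = 0$. Hence $v$ solves the required system, provided it belongs to the correct space.

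The remaining and only substantive step is to verify $v \in \Gamma^{2s+k,s,\lambda,\frac{\lambda}{2}}_{\delta,\varLambda^q}$, which I would do by estimating the defining norm term by term,
\begin{equation*}
\|v\|_{\Gamma^{2s+k,s,\lambda,\frac{\lambda}{2}}_{\delta,\varLambda^q}}
= \|v\|_{C^{2s+k,s,\lambda,\frac{\lambda}{2}}_{\delta,\varLambda^q}}
+ \|dv\|_{C^{2s+k,s,\lambda,\frac{\lambda}{2}}_{\delta+1,\varLambda^{q+1}}}
+ \|d^*v\|_{C^{2s+k,s,\lambda,\frac{\lambda}{2}}_{\delta+1,\varLambda^{q-1}}} .
\end{equation*}
The third term vanishes since $d^*v=0$, and the second equals $\|f\|_{C^{2s+k,s,\lambda,\frac{\lambda}{2}}_{\delta+1,\varLambda^{q+1}}}$, which is finite because $f \in Z^{2s+k,s,\lambda,\frac{\lambda}{2}}_{\delta+1,\varLambda^{q+1}} \subset \Gamma^{2s+k,s,\lambda,\frac{\lambda}{2}}_{\delta+1,\varLambda^{q+1}}$. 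For the first term I would bound $\|d^*\hat\varPhi u\|_{C^{2s+k,s,\lambda,\frac{\lambda}{2}}_{\delta,\varLambda^q}}$ by $\|u\|_{C^{2s+k,s,\lambda,\frac{\lambda}{2}}_{\delta,\varLambda^q}}$, repeating the estimate~\eqref{ocendlfi} from the proof of Theorem~\ref{c.weight.Hoelder.d.not.easy} with $d^*\hat\varPhi$ in place of $\varPhi$: one splits the $\mu=\frac{\lambda}{2}$ norm into its $\mu=0$ part, controlled by the continuity of $d^*\hat\varPhi$ on the $\mu=0$ scale from~\cite{SidShl20}, and the time-H\"older seminorm, controlled by applying the continuity of $d^*\hat\varPhi$ on the isotropic scale from~\cite{SidShl19} to the increments $u(\cdot,t)-u(\cdot,\tau)$. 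Since $\|u\|_{C^{2s+k,s,\lambda,\frac{\lambda}{2}}_{\delta,\varLambda^q}} \leqslant \|u\|_{\Gamma^{2s+k,s,\lambda,\frac{\lambda}{2}}_{\delta,\varLambda^q}} < \infty$, all three terms are finite and $v$ lies in $\Gamma^{2s+k,s,\lambda,\frac{\lambda}{2}}_{\delta,\varLambda^q}$.

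I expect this last membership check to be the main obstacle: in the isotropic and $\mu=0$ corollaries the continuity of $d^*\hat\varPhi$ on the ambient scale was already available, whereas here one must reproduce the time-H\"older estimate~\eqref{ocendlfi} for the composite operator $d^*\hat\varPhi$ to guarantee that $v$ does not leave the completed space $\Gamma$. Everything else follows formally from the identities $d\circ d = d^*\circ d^* = 0$ together with the decomposition.
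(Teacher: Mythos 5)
Your proposal is correct and follows essentially the same route as the paper: both use the embedding into the $\mu=0$ scale to obtain the decomposition $u=d\varPhi u+d^*\hat\varPhi u$, set $v=d^*\hat\varPhi u$, and then verify membership in $\Gamma^{2s+k,s,\lambda,\frac{\lambda}{2}}_{\delta,\varLambda^q}$ by estimating the three terms of the $\Gamma$-norm exactly as you describe (the $d^*d^*$ term vanishing, the $dd^*$ term reducing to $\|f\|$ via the decomposition, and the remaining term handled by an estimate of type \eqref{ocendlfi} for $d^*\hat\varPhi$). No substantive differences.
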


\begin{proof}
Since the space $C^{2s+k,s,\lambda,\frac{\lambda}{2}}_{\delta, \varLambda^{q}}$
is embedded continuously  into the space $C^{2s+k,s,\lambda,0}_{\delta, \varLambda^{q}}$,
the form $u$
can be represented in the form 
\begin{equation}
\label{viruchf}
u = d \varPhi  u + d^* \hat{\varPhi} u.
\end{equation}

Consider the norm of an operator $d^* \hat{\varPhi}$, acting on $\Gamma^{2s+k,s,\lambda,\frac{\lambda}{2}}_{\delta, \varLambda^{q}}$.
\small
\begin{equation*}
\big\| d^* \hat{\varPhi}u \big\|_{\Gamma^{2s+k,s,\lambda,\frac{\lambda}{2}}_{\delta, \varLambda^{q}} }=
\big\| d^* \hat{\varPhi} u \big\|_{C^{2s+k,s,\lambda,\frac{\lambda}{2}}_{\delta, \varLambda^{q}} }+
\big\| d d^* \hat{\varPhi} u \big\|_{C^{2s+k,s,\lambda,\frac{\lambda}{2}}_{\delta+1, \varLambda^{q+1}} } +
\big\| d^* d^* \hat{\varPhi} u \big\|_{C^{2s+k,s,\lambda,\frac{\lambda}{2}}_{\delta+1, \varLambda^{q-1}} }.
\end{equation*}
\normalsize
Here the last term is equal to zero, because $d^*\circ d^*=0$.
Using the embedding theorems and the continuity of the operator $ d^* \hat{\varPhi} $ on the spaces
$C^{2s+k,s,\lambda,0}_{\delta,T,\varLambda^q}$  and
$C^{2s+k,\lambda}_{\delta,\varLambda^q}$, we obtain an estimate for the first term: 
\begin{multline*}
\big\| d^* \hat{\varPhi} f \big\|_{C^{2s+k,s,\lambda,\frac{\lambda}{2}}_{\delta+1, \varLambda^{q-1}} }
\leqslant 
\big\| f \big\|_{C^{2s+k,s,\lambda,0}_{\delta+1, \varLambda^{q+1}} }+ \\
+\sum\limits_{j=1}^{s}
\sup\limits_{t,\tau \in [0,T]}
\frac{\big\| \partial^j \left( f (\cdot,t) -  f(\cdot,\tau) \right) \big\|_{C^{2s+k,0}_{\delta+1,\varLambda^{q+1}}}}
{\left| t-\tau \right|^{\frac{\lambda}{2}}} =
\big\| f \big\|_{C^{2s+k,s,\lambda,\frac{\lambda}{2}}_{\delta+1, \varLambda^{q+1}}. }
\end{multline*} 

To estimate the second term, we apply the operator $d$ to equation (\ref{viruchf}) and obtain
\begin{equation*}
\big\| d d^* \hat{\varPhi}u \big\|_{\Gamma^{2s+k,s,\lambda,\frac{\lambda}{2}}_{\delta+1, \varLambda^{q+1}} }=
\big\| f\big\|_{\Gamma^{2s+k,s,\lambda,\frac{\lambda}{2}}_{\delta+1, \varLambda^{q+1}} }.
\end{equation*}

Thus, $v=d^* \hat{\varPhi} u $ belongs to the space 
$\Gamma^{2s+k,s,\lambda,\frac{\lambda}{2}}_{\delta, \varLambda^{q}}$ and, similarly to Corollaries \ref{c.weight.Hoelder.d.easy.cor} and 
\ref{t.weight.Hoelder.d.cor}, is the desired form. 
\end{proof}
We can now describe the cohomology groups of complex (\ref{complex3}).
\begin{theorem}
Let $n \geqslant 2$, $s\in {\mathbb Z}_+$,  $0 < \lambda < 1$ and $n+m-1 < \delta < n+m$. Then the cohomology groups of complex \eqref{complex3} are isomorphic to the image of the operator $d\left(\varPhi-\varPhi_m\right)$acting from
$Z^{2s+k,s,\lambda,\frac{\lambda}{2}}_{\delta+1, \varLambda^{q+1}}$
to $Z^{2s+k,s,\lambda,\frac{\lambda}{2}}_{\delta+1, \varLambda^{q+1}}$.
\end{theorem}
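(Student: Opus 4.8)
The plan is to reproduce the scheme of Theorem \ref{kohom1}, substituting Theorem \ref{c.weight.Hoelder.d.not.easy} for Theorem \ref{t.weight.Hoelder.d} and Corollary \ref{c.weight.Hoelder.d.not.easy.cor} for Corollary \ref{t.weight.Hoelder.d.cor}, since these are precisely the $\Gamma$-space analogues needed at each step. First I would verify that the operator $d(\varPhi-\varPhi_m)$ is continuous as a map from $Z^{2s+k,s,\lambda,\frac{\lambda}{2}}_{\delta+1,\varLambda^{q+1}}$ into itself. On this cocycle subspace $df=0$, so $d\varPhi f=f$ by Lemma \ref{l.phi.Hoelder.d}; combined with the continuity of $\varPhi_m$ on the $\Gamma$-scale (which follows from the estimate \eqref{ocendlfi} already carried out in the proof of Theorem \ref{c.weight.Hoelder.d.not.easy}) and the continuity of $d$, one obtains a bound of the form $\|d(\varPhi-\varPhi_m)f\|\leqslant(1+C_1C_2)\|f\|$ exactly as in Theorem \ref{kohom1}. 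It is worth noting that, by construction of the $\Gamma$-norm, $d\varPhi f$ and $d\varPhi_m f$ are measured in the $\delta+1$ weighted norm in which they genuinely live, so no loss of continuity arises from the critical weight range $n+m-1<\delta<n+m$.

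Next I would identify the kernel of $d(\varPhi-\varPhi_m)$ on cocycles with the coboundary space $B^{2s+k,s,\lambda,\frac{\lambda}{2}}_{\delta+1,\varLambda^{q+1}}$. If $d(\varPhi-\varPhi_m)f=0$ then, using $d\varPhi f=f$, we get $f=d\varPhi_m f$, and since $\varPhi_m$ is continuous into $\Gamma^{2s+k,s,\lambda,\frac{\lambda}{2}}_{\delta,\varLambda^q}$, the form $\varPhi_m f$ is an admissible primitive, whence $f$ is a coboundary. Conversely, if $f=du$ is a coboundary, Corollary \ref{c.weight.Hoelder.d.not.easy.cor} furnishes a form $v$ with $dv=f$ and $d^*v=0$; then Theorem \ref{c.weight.Hoelder.d.not.easy} shows $f$ satisfies the orthogonality relations \eqref{ortH} and \eqref{ortH2} against all $h\in{\mathcal H}_{\leqslant m+1}$, and the explicit kernel representation \eqref{dlaremar} of $\varPhi-\varPhi_m$ (whose image lies in the span of the $h_k^{(j)}$) forces $d(\varPhi-\varPhi_m)f=0$. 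This establishes $Z\cap\mathcal{S}_{d(\varPhi-\varPhi_m)}=B$ on the relevant spaces.

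Finally I would define $\omega([g])=d(\varPhi-\varPhi_m)g$ and verify it is a well-defined linear isomorphism from $H^{2s+k,s,\lambda,\frac{\lambda}{2}}_{\delta+1,\varLambda^{q+1}}$ onto the image of $d(\varPhi-\varPhi_m)$. Well-definedness follows because replacing $g$ by $g+du$ changes the value by $d(\varPhi-\varPhi_m)(du)$, which vanishes as $du\in B$ lies in the kernel just identified; injectivity follows from $\omega([g])=g+d\varPhi_m g$, so equal images force $g_1-g_2\in B$; surjectivity is immediate from the definition of the image. The main point requiring care, rather than a genuine obstacle, is confirming that every continuity statement used for $\varPhi_m$ and $d^*\hat\varPhi$ transfers to the completed spaces $\Gamma$: here one relies on the embedding-theorem argument of \eqref{ocendlfi} together with the identities $d\varPhi f=f$, $d^*\varPhi f=0$ from Lemma \ref{l.phi.Hoelder.d}, which guarantee that the extra $d$- and $d^*$-terms in the $\Gamma$-norm are controlled by the $C$-norms already known to be bounded. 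Once this is in place, the argument of Theorem \ref{kohom1} applies verbatim, which is exactly the reduction the paper indicates.
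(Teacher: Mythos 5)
Your proposal is correct and follows exactly the route the paper takes: the paper's own proof simply states that the argument of Theorem \ref{kohom1} carries over with Corollary \ref{c.weight.Hoelder.d.not.easy.cor} (and Theorem \ref{c.weight.Hoelder.d.not.easy}) in place of the isotropic statements, and you have fleshed out precisely that substitution, including the continuity of $\varPhi_m$ and $d^*\hat\varPhi$ on the $\Gamma$-scale via the estimate \eqref{ocendlfi}. Your elaboration is in fact more detailed than the paper's one-line proof and contains no gaps relative to it.
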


\begin{proof}
The proof is carried out similarly to the proof of Theorem \ref{kohom1}, relying on Corollary 
\ref{c.weight.Hoelder.d.not.easy.cor} instead of Theorem \ref{t.weight.Hoelder.d}.
\end{proof}

Also, as in the previous case, the elements of the image of the operator $d\left( \varPhi-\varPhi_m \right)$
acting from $Z^{2s+k,s,\lambda,0}_{\delta+1, \varLambda^{q+1}}$ 
to $Z^{2s+k,s,\lambda,0}_{\delta+1, \varLambda^{q+1}}$ are represented as follows
\begin{equation*}
d \left( \sum_{\left|I\right|=q} \sum_{k=1}^{m+1} \sum_{j=1}^{J (k)} 
\frac{a_I (t) h_k^{(j)} \left(x\right) dx_I}
{(n+2k-2){\vartheta}^{n+2k-2}(x)} \right), 
\end{equation*}
where $a_I(t)$ are functions of the variable $t$ of the class $C^{s,\frac{\lambda}{2}}([0,T])$.

\textit{Acknowledgments.\,}
The author was supported by a grant of the Foundation for the advancement of theoretical
physics and mathematics ``BASIS.''


\end{document}